%second G-W centres paper
%file updated 13/4/12

\documentclass[12pt,a4paper]{amsart}
\usepackage{amssymb}
\usepackage{graphicx}
\usepackage[matrix,arrow]{xy}

\addtolength\textwidth{1cm}

\newcommand{\image}{{\rm{Im}}}
\newcommand{\End}{{\rm{End}}}
\newcommand{\Hom}{{\rm{Hom}}}
\newcommand{\Q}{\ensuremath{\mathbb{Q}}}
\newcommand{\Z}{\ensuremath{\mathbb{Z}}}

\newcommand{\Zpl}{\ensuremath{\Z_{(p)}}}

\newcommand{\bpn}{\ensuremath{BP\langle n\rangle}}
\newcommand{\bpm}{\ensuremath{BP\langle m\rangle}}

\theoremstyle{plain}
\numberwithin{equation}{section}
\newtheorem{theorem}[equation]{Theorem}
\newtheorem{proposition}[equation]{Proposition}

\newtheorem{lemma}[equation]{Lemma}

\newtheorem{remark}[equation]{Remark}

\theoremstyle{definition}
\newtheorem{definition}[equation]{Definition}

\begin{document}

\title{Central cohomology operations and $K$-theory}
\author{Imma G\'alvez-Carrillo
 \and Sarah Whitehouse}                             
\address{Departament de Matem\`{a}tica Aplicada III,
Escola D'Enginyeria de Terrassa, Universitat Polit\`{e}cnica de Catalunya,
C/ Colom 1, 08222 Terrassa, Spain.}
\email{m.immaculada.galvez@upc.edu} 
\address{School of Mathematics and Statistics, University of Sheffield, Sheffield S3 7RH, UK.} 
\email{s.whitehouse@sheffield.ac.uk}

\date{$13^{\text{th}}$ April 2012}

\begin{abstract}
For stable degree zero operations, and also for additive unstable 
operations of bidegree $(0,0)$, it is known that the centre of the ring of operations
for complex cobordism is isomorphic to the corresponding ring of connective
complex $K$-theory operations.
Similarly, the centre of the ring of $BP$ operations
is the corresponding ring for the Adams summand of $p$-local connective complex $K$-theory.
Here we show that, in the additive unstable context, this result holds with $BP$ replaced by
$\bpn$ for any $n$. Thus, for all chromatic heights, the only central operations are
those coming from $K$-theory.

\end{abstract}

\keywords{$K$-theory -- operations -- cobordism}
\subjclass[2000]{Primary:   55S25; % K-theory operations and generalized cohomology operations
Secondary: 55N22, % Bordism and cobordism theories, formal group laws
           19L41. % Connective $K$-theory, cobordism
           }

\maketitle

\section{Introduction}
\label{intro}

We study cohomology operations for various cohomology theories related to complex cobordism
and show that, in a suitable context, the central cohomology operations are precisely those
coming from complex $K$-theory. Specifically,
we consider the ring of additive unstable bidegree $(0,0)$ operations for the Adams summand of $p$-local
complex $K$-theory and we show that this ring maps via an injective ring 
homomorphism to the corresponding ring of operations for the
theory $\bpn$, for all $n\geq 1$. The image of this map is the centre of the 
target ring. 

Previously results of this type had been established with target $BP$
(which may be regarded as the $n=\infty$ case) in both the stable and additive unstable
contexts; see~\cite{GW} and~\cite{strongw1}.

The $\bpn$ result that we give here is quite a simple consequence of combining certain unstable
$BP$ splittings due to Wilson~\cite{Wilson75} with the results of~\cite{strongw1}.
Nonetheless we think it is interesting since it shows that the central operations
are precisely those arising from $K$-theory at every chromatic height. 
\bigskip

Let $p$ be an odd prime and let $BP$ be the $p$-local Brown-Peterson spectrum,
a summand of the $p$-local complex bordism spectrum $MU_{(p)}$. 
For each $n\geq0$, 
there is a connective commutative ring spectrum $\bpn$ with coefficient groups
    $$
    \bpn_*=\Zpl[v_1, v_2, \dots, v_n]=BP_*/(v_{n+1}, v_{n+2}, \dots )=BP_*/J_n.
    $$
Here $BP_*=\Zpl[v_1, v_2, \dots]$, where the $v_i$s are Hazewinkel's generators, with
$v_i$ in degree $2(p^i-1)$ 
and $J_n=(v_{n+1}, v_{n+2}, \dots )$.
These theories were introduced by Wilson in~\cite{Wilson75} and further studied
by Johnson and Wilson in~\cite{JohnsonWilson73}.
They fit into a tower of $BP$-module spectra:
    $$
    \xymatrix{
    BP\ar[r]&\dots\ar[r]&\bpn\ar[r] &BP\langle n-1 \rangle\ar[r] &\dots\\
    }
    $$

In particular, $BP\langle 0\rangle=H\Zpl$ and
$BP\langle 1\rangle=g$, the Adams summand of connective $p$-local complex $K$-theory.
\medskip

Recall that for a cohomology theory $E$, the
bidegree $(0,0)$ unstable operations are given by $E^0(\underline{E}_0)$,
where
$\underline{E}_0$ denotes the $0$-th space of the $\Omega$-spectrum representing
the cohomology theory $E$.
Inside here we have
$PE^0(\underline{E}_0)$, the additive bidegree $(0,0)$ unstable operations,
which we will denote by $\mathcal{A}(E)$. This is a ring, with multiplication given by
composition of operations.

Using unstable $BP$ splittings due to Wilson, we will define an injective ring
homomorphism $\hat{\iota}_n:\mathcal{A}(g)\to \mathcal{A}(\bpn)$. Our main result,
Theorem~\ref{main}, is that the image of $\hat{\iota}_n$ is the centre 
of $\mathcal{A}(\bpn)$.

The situation is analogous to that of matrix rings, where the diagonal matrices
form the centre of the $n\times n$ matrices for all $n$. Indeed, we will see that
all operations considered are determined by the matrices giving their actions
on homotopy groups. Of course, not all matrices arise as actions
of operations; there are complicated constraints. Essentially, what we show is
that, 
\begin{enumerate}
\item at every height $n$, enough matrices arise so that central operations are forced to act diagonally (in a suitable sense), and
\item the constraints on the diagonal operations which can occur are the same for all $n$.
\end{enumerate}
\medskip

This paper is organized as follows. In Section~\ref{sec:splittings} we explain some of Wilson's results on unstable
$BP$ splittings and deduce faithfulness of the actions of additive $\bpn$ operations of bidegree $(0,0)$ on homotopy groups. In the next section
we recall some results on additive operations for the Adams summand $g$ of connective $p$-local complex $K$-theory.
We also
define our map of operations $\mathcal{A}(g)\to \mathcal{A}(\bpn)$ and give its basic properties. In Section~\ref{sec:diag} we
define and study diagonal operations. Section~\ref{sec:cong} contains the proof of our main
result, Theorem~\ref{main}, that the image of the
map coincides with the centre of the target ring. 

\section*{Acknowledgements}
The first author gratefully acknowledges support
from the grants
MTM2010-15831 ``Homotop\'{\i}a de
orden superior en \'algebra y geometr\'{\i}a" and
MTM2010-20692 ``An\'alisis local en grupos y espacios topol\'ogicos" from
the FEDER-Ministerio de Econom\'{\i}a y Competitividad, Spanish Government, and
grant 2009SGR-1092 ``Grup de Topologia Algebraica de Barcelona" from
AGAUR, Generalitat de Catalunya. The second author gratefully 
acknowledges support from the former two grants
 for visits to Barcelona.

\section{Unstable splittings}
\label{sec:splittings}
In this section we begin by recalling some results on unstable $BP$ splittings. These results are due to
Wilson~\cite{Wilson75}; we use~\cite{bjw} as our main reference. We then deduce some straightforward consequences
for operations. 

As usual, let $\underline{E}_k$ denote the $k$-th space of the $\Omega$-spectrum representing
the cohomology theory $E$. For $n\geq 0$, write $\pi_n:\underline{BP}_0\to\underline{\bpn}_0$ for the
map coming from the map of $BP$-module spectra $BP\to\bpn$. The induced map on homotopy,
$(\pi_n)_*: BP_*\to \bpn_*=BP_*/J_n$, is the canonical projection.

The following lemma is the special case of~\cite[Lemma 22.1]{bjw} for zero spaces.

\begin{lemma}\cite[Lemma 22.1]{bjw}
For all $n\geq 0$, there is an $H$-space
splitting $\theta_n: \underline{\bpn}_0\to \underline{BP}_0$ of $\pi_n$. Let
$e_n=\theta_n\pi_n$ denote the corresponding additive idempotent $BP$-operation;
the choices can be made compatibly so that $e_ne_m=e_me_n=e_m$ for $m<n$.\qed
\end{lemma}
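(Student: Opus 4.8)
The plan is to construct the $H$-space section $\theta_n$ by obstruction theory, arranged inductively up the tower so that the compatibility is built in, and then to deduce the idempotent relations formally. I would start by identifying the relevant fibration. The map $\pi_n$ is obtained by passing to zero spaces from the map of connective spectra $BP\to\bpn$; equivalently it is $\Omega^\infty$ applied to that map, and $\Omega^\infty$ preserves fibre sequences, so the homotopy fibre of $\pi_n$ is $\underline{F_n}_0$, the zero space of the fibre spectrum $F_n$ of $BP\to\bpn$. Since $(\pi_n)_*\colon BP_*\to\bpn_*$ is the surjective quotient by $J_n$, the long exact sequence gives $\pi_*F_n=J_n=(v_{n+1},v_{n+2},\dots)$, a free graded $\Zpl$-module concentrated in even degrees $\geq 2(p^{n+1}-1)$. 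Thus $\underline{F_n}_0$ is $(2p^{n+1}-3)$-connected, with free, evenly graded homotopy groups, and I am seeking a section of the fibration with total space $\underline{BP}_0$, base $\underline{\bpn}_0$, projection $\pi_n$, and fibre $\underline{F_n}_0$, in the homotopy category of $H$-spaces. (One cannot expect $\theta_n$ to be an infinite loop map, as that would split $\bpn$ off $BP$ as spectra.)

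For existence, I would run the standard obstruction theory for sections of a fibration, keeping track of the $H$-space structures, all the spectra and maps in sight being connective so that $\underline{BP}_0\to\underline{\bpn}_0$ is a map of infinite loop spaces and in particular of $H$-spaces. The successive obstructions, and the indeterminacy at each stage, are controlled by the groups $H^{k+1}(\underline{\bpn}_0;\,\pi_k\underline{F_n}_0)$ with $k\geq 2p^{n+1}-2$. Here I would invoke Wilson's computation that $H_*(\underline{\bpn}_0;\Zpl)$, hence $H^*(\underline{\bpn}_0;\Zpl)$, is torsion-free and concentrated in even degrees. Since the coefficient modules $\pi_k\underline{F_n}_0=(J_n)_k$ are free $\Zpl$-modules sitting in even degrees $k$, each group $H^{k+1}(\underline{\bpn}_0;(J_n)_k)$ lies in odd total degree and so vanishes; every obstruction therefore vanishes, every partial section extends, and an $H$-space section $\theta_n$ exists.

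For the compatibility and the idempotent relations, I would build the $\theta_n$ by induction up the tower. Write $\rho_k\colon\underline{BP\langle k\rangle}_0\to\underline{BP\langle k-1\rangle}_0$ for the zero-space maps, and let $\rho\colon\underline{\bpn}_0\to\underline{BP\langle m\rangle}_0$ denote the composite, so that $\rho\,\pi_n=\pi_m$. The same vanishing of obstruction groups lets me choose sections $\sigma_k$ of the $\rho_k$ and then, given $\theta_{n-1}$, choose $\theta_n$ with $\theta_n\sigma_n=\theta_{n-1}$; iterating, for every $m<n$ one has $\theta_m=\theta_n\,\sigma$ with $\sigma$ a section of $\rho$. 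Then from $\pi_n\theta_n=\mathrm{id}$ one gets $e_n^2=\theta_n(\pi_n\theta_n)\pi_n=e_n$. For $m<n$: since $\pi_m=\rho\,\pi_n$ and $\pi_n\theta_n=\mathrm{id}$, the composite $\pi_m\theta_n$ equals $\rho$, so $e_me_n=\theta_m(\pi_m\theta_n)\pi_n=\theta_m\rho\,\pi_n=\theta_m\pi_m=e_m$; and since $\theta_m=\theta_n\sigma$ with $\pi_n\theta_n=\mathrm{id}$ one has $\pi_n\theta_m=\sigma$, so $e_ne_m=\theta_n(\pi_n\theta_m)\pi_m=\theta_n\sigma\,\pi_m=\theta_m\pi_m=e_m$.

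I expect the genuine obstacle to be the homological input: the fact that $H_*(\underline{\bpn}_0;\Zpl)$ is torsion-free and evenly graded — together with the analogous statements along the whole tower, which are what make the inductive choices of the $\sigma_k$ and $\theta_n$ go through — is exactly the substantive part of Wilson's work, and it is the only thing that forces the obstruction groups to vanish. The idempotent bookkeeping and the passage to zero spaces are routine once that is granted, which is why the lemma may simply be quoted from \cite[Lemma 22.1]{bjw}.
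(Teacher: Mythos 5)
The paper proves nothing here: the lemma is quoted as the zero-space case of \cite[Lemma 22.1]{bjw}, which rests on Wilson's splitting theorem \cite{Wilson75}, so your instinct to treat the heavy input as citable matches what the authors do. Much of your reconstruction is right and worth keeping: the fibre spectrum of $BP\to\bpn$ has homotopy $J_n$, concentrated in even degrees, so the obstructions $H^{k+1}(\underline{\bpn}_0;(J_n)_k)$ to a section of $\pi_n$ \emph{as a map of spaces} lie in odd degrees and vanish because $H_*(\underline{\bpn}_0;\Zpl)$ is even and torsion-free; and your idempotent bookkeeping is correct, including the observation that $e_me_n=e_m$ is automatic from $\pi_m=\rho\pi_n$ and $\pi_n\theta_n=\mathrm{id}$, while $e_ne_m=e_m$ genuinely needs the compatible choices $\theta_m=\theta_n\sigma$.

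The gap is in promoting the section to an $H$-space section, which is the actual content of the lemma (it is what makes $e_n$ an \emph{additive} operation). Vanishing of the groups you name gives a section $s$ of $\pi_n$, but the obstruction to $s$ being an $H$-map is its $H$-deviation, a class in $[\underline{\bpn}_0\wedge\underline{\bpn}_0,\underline{F_n}_0]$; both the smash and the fibre are even, so this group lives in \emph{even} degrees and does not vanish for parity reasons, and modifying $s$ by fibre-valued maps only changes the deviation by cross-effect terms, whose surjectivity onto the possible deviations is not formal. Equivalently, one must produce a \emph{primitive} class in $BP^0(\underline{\bpn}_0)$ lifting the fundamental class, and that uses Wilson's finer structural results on the Hopf-algebra/Hopf-ring structure of the homology of these spaces, not merely its evenness and torsion-freeness. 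A smaller glossed point: your inductive step choosing $\theta_n$ with $\theta_n\sigma_n\simeq\theta_{n-1}$ is a relative extension problem, and its obstruction groups $H^{k+1}(X,A;(J_n)_k)$ vanish not by parity alone but because $\sigma_n^*$ is split surjective on cohomology (being a section of $\rho_n$). With the $H$-structure issue acknowledged as part of what \cite[Lemma 22.1]{bjw} and \cite{Wilson75} supply, rather than as a consequence of the odd-degree vanishing, your outline is a fair account of why the paper may simply quote the result.
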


These splittings immediately allow us to compare operations. 

\begin{lemma}\label{splitting}
We have maps 
    $$
    i_n: \mathcal{A}(\bpn)\rightleftarrows \mathcal{A}(BP) :p_n
    $$
such that
\begin{enumerate}
\item $i_np_n: \mathcal{A}(BP)\to \mathcal{A}(BP)$ is given by $[f]\mapsto
[e_nfe_n]$;
\item $p_n$ splits $i_n$ (so $i_n$ is injective and $p_n$ is surjective);
\item $i_n$ is a non-unital ring homomorphism;
\item $p_n$ is an additive group homomorphism.
\end{enumerate} 
\end{lemma}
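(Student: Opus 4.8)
The plan is to use the $H$-space splittings $\theta_n$ and projections $\pi_n$ from the previous lemma directly at the level of operations. Given an additive operation $f:\underline{BP}_0\to\underline{BP}_0$ representing a class $[f]\in\mathcal{A}(BP)$, I would define $p_n[f]=[\pi_n f\theta_n]$, noting that $\pi_n f\theta_n:\underline{\bpn}_0\to\underline{\bpn}_0$ is additive since $\theta_n$ is an $H$-map, $\pi_n$ is an $H$-map, and $f$ is additive. Conversely, for $[g]\in\mathcal{A}(\bpn)$, I would set $i_n[g]=[\theta_n g\pi_n]$, which is again additive for the same reason. First I would check these are well-defined on the relevant $P(-)$-subgroups, which is immediate from functoriality of $PE^0(\underline{E}_0)$ and the $H$-map properties.

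Next I would verify the four listed properties. For (1), compute $i_np_n[f]=[\theta_n\pi_n f\theta_n\pi_n]=[e_nfe_n]$ directly from $e_n=\theta_n\pi_n$. For (2), I would use that $\pi_n\theta_n=\mathrm{id}_{\underline{\bpn}_0}$ (this is what it means for $\theta_n$ to be a splitting of $\pi_n$), so $p_ni_n[g]=[\pi_n\theta_n g\pi_n\theta_n]=[g]$; hence $p_n$ splits $i_n$, which formally forces $i_n$ injective and $p_n$ surjective. For (3), since composition of operations corresponds to composition of maps of spaces, $i_n[g]i_n[h]=[\theta_n g\pi_n\theta_n h\pi_n]=[\theta_n g h\pi_n]=i_n[gh]$, again using $\pi_n\theta_n=\mathrm{id}$; note $i_n$ need not be unital because $i_n[\mathrm{id}]=[e_n]\neq[\mathrm{id}]$ in general. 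For (4), additivity of $p_n$ as a group homomorphism follows because the group structure on $\mathcal{A}(BP)$ and $\mathcal{A}(\bpn)$ is induced by the loop-space (infinite loop) additions, which both $\pi_n$ and $\theta_n$ respect up to homotopy.

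The one point needing a little care—and the only place I expect any friction—is the precise sense in which $\theta_n$ being an \emph{$H$-space} splitting guarantees that pre- and post-composition preserve the \emph{additive} (primitive) part $P\,BP^0(\underline{BP}_0)$, and that the assignment $[f]\mapsto[\theta_n g\pi_n]$ lands in the correct additive subgroup and is compatible with the ring/group structures on the nose rather than merely up to unspecified homotopy. This is handled by recalling that $\mathcal{A}(E)=PE^0(\underline{E}_0)$ is precisely the subgroup of operations compatible with the $H$-space structure, so an $H$-map automatically carries additive operations to additive operations; no spectrum-level refinement of $\theta_n$ is required. Once this is spelled out, all four properties are formal consequences of the identities $e_n=\theta_n\pi_n$ and $\pi_n\theta_n=\mathrm{id}$ together with functoriality.
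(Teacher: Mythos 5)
Your proposal is correct and follows essentially the same route as the paper: define $p_n[f]=[\pi_n f\theta_n]$ and $i_n[g]=[\theta_n g\pi_n]$, note these preserve additivity because $\pi_n$ and $\theta_n$ are $H$-maps, and deduce (1)--(4) formally from $\theta_n\pi_n=e_n$ and $\pi_n\theta_n\simeq\mathrm{id}$. The only cosmetic difference is that the paper records the splitting identity only up to homotopy ($\pi_n\theta_n\simeq\mathrm{id}$), which is all that is needed since one works with homotopy classes of maps.
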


\begin{proof}
We have the maps
    \begin{align*}
    [\theta_n\circ-\circ\pi_n]:\bpn^0(\underline{\bpn}_0)&\to BP^0(\underline{BP}_0)\\
    [f]&\mapsto [\theta_nf\pi_n]
    \end{align*}
and 
    \begin{align*}
    [\pi_n\circ-\circ\theta_n]:BP^0(\underline{BP}_0)&\to \bpn^0(\underline{\bpn}_0)\\
    [f]&\mapsto [\pi_nf\theta_n].
    \end{align*}
Since $\pi_n$ and $\theta_n$ are $H$-space maps, these maps restrict to maps
on the additive operations, which we denote by $i_n$ and $p_n$ respectively.

The first property follows from
$\theta_n\pi_n=e_n$. The remaining properties are easy to check using that $\pi_n\theta_n\simeq id$ and that $\theta_n$
is a map of $H$-spaces.
\end{proof}

\begin{remark}\rm
It follows that we may $\mathcal{A}(\bpn)$ identify with the subring $e_n \mathcal{A}(BP)e_n$
of $\mathcal{A}(BP)$.
\end{remark}

In the following lemma, we will record some information about actions in homotopy.
Note that we can regard $\bpn_*=\Zpl[v_1, \dots, v_n]$ as both a subring and
a quotient ring of $BP_*$. We will abuse notation by writing the inclusion silently
and we use $[\,\,]$ to denote classes in $\bpn_*=BP_*/J_n$.

\begin{lemma}
\label{actions}
\begin{enumerate}
\item For $x\in \bpn_*$, $(\theta_n)_*(x)\equiv x\mod J_n$. In particular,
$(\theta_n)_*$ is the identity in degrees less than $2(p^{n+1}-1)$.
\item Let $\phi\in\mathcal{A}(\bpn)$. Then
\begin{enumerate}
\item for $y\in BP_*$,
$(i_n\phi)_*(y)\equiv \phi_*([y])\mod J_n$, and
\item for $y\in J_n$, $(i_n\phi)_*(y)=0$.
\end{enumerate}
\item Let $\varphi\in\mathcal{A}(BP)$ and suppose that $\varphi_*(J_n)\subseteq J_n$.
Then, for $z\in\bpn_*$, $(p_n\varphi)_*(z)=[\varphi_*(z)]$.
\end{enumerate}
\end{lemma}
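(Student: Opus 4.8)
The plan is to verify each of the three statements by unwinding the definitions of $\theta_n$, $i_n$, $p_n$ in terms of homotopy groups, using throughout that $(\pi_n)_*\colon BP_*\to BP_*/J_n$ is the canonical projection and that $\theta_n$ splits $\pi_n$, i.e. $(\pi_n)_*(\theta_n)_* = \mathrm{id}$ on $\bpn_*$.

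For part (1), I would argue as follows. Since $\theta_n$ splits $\pi_n$, applying $(\pi_n)_*$ to $(\theta_n)_*(x)$ gives back $x$ (viewing $x\in\bpn_* = BP_*/J_n$); but $(\pi_n)_*$ is reduction mod $J_n$, so $(\theta_n)_*(x)\equiv x\bmod J_n$ as an element of $BP_*$. For the ``in particular'' clause, note that $J_n$ is generated by the $v_i$ with $i\geq n+1$, and $v_{n+1}$ has degree $2(p^{n+1}-1)$; hence $J_n$ is zero in degrees below $2(p^{n+1}-1)$, so in that range the congruence forces $(\theta_n)_*(x) = x$ on the nose. (Here one uses the standard convention, already in force in the excerpt, that $\bpn_* = \Zpl[v_1,\dots,v_n]$ is simultaneously a subring of $BP_*$ and the quotient $BP_*/J_n$, the composite of the two being the identity in this low range.)

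For part (2), recall from Lemma~\ref{splitting} that $i_n\phi = [\theta_n f \pi_n]$ where $f$ represents $\phi$, so on homotopy $(i_n\phi)_* = (\theta_n)_*\circ\phi_*\circ(\pi_n)_*$. For $y\in BP_*$ we get $(i_n\phi)_*(y) = (\theta_n)_*\bigl(\phi_*([y])\bigr)$, and now part (1) applied to the element $\phi_*([y])\in\bpn_*$ gives $(\theta_n)_*(\phi_*([y]))\equiv\phi_*([y])\bmod J_n$, which is (2a). Statement (2b) is then immediate: if $y\in J_n$ then $[y] = 0$ in $\bpn_*$, so $\phi_*([y]) = 0$ and hence $(i_n\phi)_*(y) = (\theta_n)_*(0) = 0$.

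For part (3), unwind $p_n\varphi = [\pi_n g\theta_n]$ from Lemma~\ref{splitting}, so $(p_n\varphi)_* = (\pi_n)_*\circ\varphi_*\circ(\theta_n)_*$. Given $z\in\bpn_*$, part (1) gives $(\theta_n)_*(z) = z + j$ for some $j\in J_n$; then $\varphi_*((\theta_n)_*(z)) = \varphi_*(z) + \varphi_*(j)$, and the hypothesis $\varphi_*(J_n)\subseteq J_n$ ensures $\varphi_*(j)\in J_n$. Applying $(\pi_n)_*$ (reduction mod $J_n$) kills $\varphi_*(j)$ and sends $\varphi_*(z)$ to $[\varphi_*(z)]$, giving $(p_n\varphi)_*(z) = [\varphi_*(z)]$ as claimed. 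I do not expect any real obstacle here; the only point requiring a little care is keeping straight the two roles of $\bpn_*$ (sub- vs.\ quotient ring of $BP_*$) and the degree bookkeeping in the ``in particular'' clause of part (1), but both are routine given the conventions already fixed in the excerpt.
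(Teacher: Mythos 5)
Your proposal is correct and follows essentially the same argument as the paper: part (1) from $(\pi_n)_*(\theta_n)_*=\mathrm{id}$ together with the vanishing of $J_n$ below degree $2(p^{n+1}-1)$, and parts (2) and (3) by writing $(i_n\phi)_*=(\theta_n)_*\phi_*(\pi_n)_*$ and $(p_n\varphi)_*=(\pi_n)_*\varphi_*(\theta_n)_*$ and using part (1), additivity, and the hypothesis $\varphi_*(J_n)\subseteq J_n$ exactly as the paper does.
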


\begin{proof}
Part (1) is immediate from $\pi_n\theta_n\simeq id$. Then, for part (2), for $y\in BP_*$,
    $$
    (i_n\phi)_*(y)  =(\theta_n)_*\phi_*(\pi_n)_*(y)
                    =(\theta_n)_*\phi_*([y])
                    \equiv \phi_*([y])\mod J_n,
    $$
and for $y\in J_n$,
    $$
    (i_n\phi)_*(y)  =(\theta_n)_*\phi_*(\pi_n)_*(y)
                    =(\theta_n)_*\phi_*(0) =0.
    $$
Finally, for part (3), we have,
    \begin{align*}
    (p_n\varphi)_*(z)
        &=(\pi_n)_*\varphi_*(\theta_n)_*(z)\\
        &=(\pi_n)_*\varphi_*(z+w)\qquad\text{for some $w\in J_n$}\\
        &=(\pi_n)_*(\varphi_*(z)+\varphi_*(w)) \qquad\text{since $\varphi$ is additive}\\
        &=[\varphi_*(z)]\qquad\text{since, by hypothesis, $\varphi_*(w)\in J_n$.}\qedhere
    \end{align*}
\end{proof}

\begin{remark}\rm
It is worth noting that $(\theta_n)_*$ is \emph{not} the obvious splitting on homotopy
groups with image $\Zpl[v_1, \dots, v_n]$ (and it is not a ring homomorphism). 
See~\cite[p817]{bjw} for an example.
\end{remark}
\medskip

Another important consequence of the splitting is that the action of the additive
 $\bpn$ operations of
 bidegree $(0,0)$
 on homotopy
groups is faithful. As we will see, the splitting allows us to deduce this from
the corresponding result for $BP$, which was proved in~\cite[Proposition 1]{strongw1}.
(Key ingredients for the $BP$ case are that $BP$-theory has good duality and that everything is torsion-free.)   
\smallskip

Given an unstable $E$-operation $\theta\in  E^0(\underline{E}_0)\cong [\underline{E}_0, \underline{E}_0]$, we may
consider the induced homomorphism of graded abelian groups
$\theta_*: \pi_*( \underline{E}_0)\to \pi_*( \underline{E}_0)$ given by the
action of $\theta$ on homotopy groups. For a graded abelian group $M$, we write
$\End(M)$ for the ring of homomorphisms of graded abelian groups from $M$ to itself.

Sending an operation to its action
on homotopy groups gives a map
    \begin{align*}
    E^0(\underline{E}_0) &\to \End(\pi_*( \underline{E}_0))\\
    \phi &\mapsto \phi_*.
    \end{align*}
The restriction of this map to the additive $E$-operations $\mathcal{A}(E)$ 
is a ring homomorphism
and we denote this
by $\beta_E$:
    \begin{align*}
    \beta_E: \mathcal{A}(E) &\to \End(\pi_*( \underline{E}_0))\\
    \phi &\mapsto \phi_*.
    \end{align*}

\begin{proposition}\label{faithfulaction}
For all $n\geq 0$, the ring homomorphism
    $$
    \beta_{\bpn}:\mathcal{A}(\bpn) \to 
    \End(\pi_*( \underline{\bpn}_0))
    $$ 
is injective.
\end{proposition}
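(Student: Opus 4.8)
The plan is to reduce the faithfulness statement for $\bpn$ operations to the already-established faithfulness statement for $BP$ operations, using the splitting machinery of Lemma~\ref{splitting} and the compatibility of actions on homotopy recorded in Lemma~\ref{actions}. The key observation is that $\beta_{BP}$ is injective by~\cite[Proposition 1]{strongw1}, and that $i_n:\mathcal{A}(\bpn)\to\mathcal{A}(BP)$ is injective by Lemma~\ref{splitting}(2). So it suffices to show that the action of $i_n\phi$ on $\pi_*(\underline{BP}_0)$ is determined by the action of $\phi$ on $\pi_*(\underline{\bpn}_0)$; then $\beta_{\bpn}(\phi)=0$ would force $\beta_{BP}(i_n\phi)=0$, hence $i_n\phi=0$, hence $\phi=0$.

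Concretely, I would argue as follows. Suppose $\phi\in\mathcal{A}(\bpn)$ satisfies $\beta_{\bpn}(\phi)=0$, i.e.\ $\phi_*=0$ on $\bpn_*$. By Lemma~\ref{actions}(2)(a), for every $y\in BP_*$ we have $(i_n\phi)_*(y)\equiv\phi_*([y])=0\pmod{J_n}$, so $(i_n\phi)_*(y)\in J_n$; and by Lemma~\ref{actions}(2)(b), $(i_n\phi)_*(y)=0$ whenever $y\in J_n$. It remains to promote ``values lie in $J_n$'' to ``values are zero''. For this I would use that $i_n\phi$ factors as $e_n(i_n\phi)e_n$ (the Remark after Lemma~\ref{splitting}), so its action on homotopy factors through $(e_n)_*=(\theta_n)_*(\pi_n)_*$. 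Since $(\pi_n)_*$ kills $J_n$, the operation $i_n\phi$ depends only on the image of a homotopy class in $BP_*/J_n=\bpn_*$; combined with $\phi_*=0$ this gives $(i_n\phi)_*=0$ directly, without even needing the mod-$J_n$ refinement. Then injectivity of $\beta_{BP}$ gives $i_n\phi=0$, and injectivity of $i_n$ gives $\phi=0$.

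Let me restate that last point more carefully, since it is really the heart of the matter. We have $(i_n\phi)_*=(\theta_n)_*\phi_*(\pi_n)_*$ as endomorphisms of $\pi_*(\underline{BP}_0)=BP_*$ (this is the computation displayed in the proof of Lemma~\ref{actions}(2)). If $\phi_*=0$ then this composite is manifestly zero. Hence $i_n\phi$ lies in the kernel of $\beta_{BP}$, which is trivial by~\cite[Proposition 1]{strongw1}. Therefore $i_n\phi=0$ in $\mathcal{A}(BP)$, and since $i_n$ is injective by Lemma~\ref{splitting}(2), we conclude $\phi=0$. This establishes that $\ker\beta_{\bpn}=0$.

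The only step requiring any care is the reduction itself, namely making sure that the identity $(i_n\phi)_*=(\theta_n)_*\phi_*(\pi_n)_*$ is legitimately an identity of \emph{ring} endomorphisms (or at least additive endomorphisms) of $\pi_*(\underline{BP}_0)$, so that vanishing of $\phi_*$ really does force vanishing of the composite — but this is exactly what Lemma~\ref{actions}(2) provides, since $i_n\phi=[\,\theta_n\phi\pi_n\,]$ by definition of $i_n$ and the action on homotopy is functorial in the operation. So there is essentially no obstacle here; the proposition is a formal consequence of the $BP$ case together with the splitting. The case $n=0$ (where $\bpn=H\Zpl$) and small $n$ are covered uniformly by the same argument, since Lemma~\ref{actions} holds for all $n\geq0$.
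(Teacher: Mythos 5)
Your argument is correct and is essentially identical to the paper's proof: both use $(i_n\phi)_*=(\theta_n)_*\phi_*(\pi_n)_*$ to conclude $(i_n\phi)_*=0$ when $\phi_*=0$, then invoke injectivity of $\beta_{BP}$ from~\cite[Proposition 1]{strongw1} and injectivity of $i_n$ from Lemma~\ref{splitting}. The detour through the mod-$J_n$ statements of Lemma~\ref{actions}(2) is unnecessary, as you yourself note.
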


\begin{proof}
Let $\phi\in \mathcal{A}(\bpn)$ and suppose that $\beta_{\bpn}(\phi)=\phi_*=0$.
Then 
    $$
    \beta_{BP}(i_n(\phi))=(i_n(\phi))_*=(\theta_n\phi\pi_n)_*=(\theta_n)_*\phi_*(\pi_n)_*=0.
    $$
But $\beta_{BP}$ is injective (see~\cite[Proposition 1]{strongw1}) and so is $i_n$, so
$\phi=0$.
\end{proof}

\section{The comparison map}

In this section we begin with some reminders about the additive operations for the Adams summand $g$
of $p$-local connective complex $K$-theory and we recall the main result of~\cite{strongw1}. We then go on to define
the main map to be studied in this paper, $\hat{\iota}_n:\mathcal{A}(g)\to \mathcal{A}(\bpn)$,
and we discuss its basic properties.
\smallskip

A description of the ring of additive operations $\mathcal{A}(g)$ for the Adams summand can be
deduced from the corresponding result for integral complex $K$-theory (see~\cite[Lecture 4]{adams-sln99}).
Another description can be found in~\cite{strongw2}: Theorems 3.3 and 4.2 of~\cite{strongw2} together give 
a topological basis for this ring, where the basis elements are
certain polynomials in the Adams operations $\Psi^0$, $\Psi^p$ and $\Psi^q$ (where $q$ is primitive
modulo $p^2$ and thus a topological generator for the $p$-adic units). The precise details of the
description are not needed here; what is important to note is that all operations can be
described in terms of Adams operations.
\smallskip

The main result of~\cite{strongw1} (in the split case) is the following.
\smallskip

\begin{theorem}~\cite[Theorem 19]{strongw1}\label{theorem:BP}
There is an injective ring homomorphism $\hat{\iota}:\mathcal{A}(g)\to \mathcal{A}(BP)$
such that the image is precisely the centre of the ring $\mathcal{A}(BP)$.\qed
\end{theorem}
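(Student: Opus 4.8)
The plan is to prove the two inclusions $\image\hat{\iota}\subseteq Z(\mathcal{A}(BP))$ and $Z(\mathcal{A}(BP))\subseteq\image\hat{\iota}$, after setting up a concrete model in which to compute. Since $\beta_{BP}$ is injective by~\cite[Proposition 1]{strongw1}, I would identify $\mathcal{A}(BP)$ with its image in $\End(BP_*)=\prod_{d\geq 0}\End_{\Zpl}(BP_{2d})$, each $BP_{2d}$ being free of finite rank over $\Zpl$ and everything concentrated in even degrees; similarly $\mathcal{A}(g)$ sits in $\prod_{d\geq 0}\End_{\Zpl}(g_{2d})$, where $g_*=\Zpl[v_1]$ has rank at most one in each degree. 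The map $\hat{\iota}$ is built from Adams operations: on $BP$, as on $g$, there are additive operations $\Psi^0,\Psi^p,\Psi^q$ (with $q$ primitive modulo $p^2$), and $\Psi^p,\Psi^q$ act on $\pi_{2d}(\underline{BP}_0)$ by the scalars $p^d,q^d$, exactly matching their action on $\pi_{2d}(\underline{g}_0)$. Using the topological basis for $\mathcal{A}(g)$ in terms of polynomials in $\Psi^0,\Psi^p,\Psi^q$ furnished by Theorems 3.3 and 4.2 of~\cite{strongw2}, $\hat{\iota}$ sends such a polynomial to the corresponding polynomial of $BP$ operations; faithfulness of the two actions then shows this is a well-defined injective unital ring homomorphism, the congruences on the coefficients ensuring convergence and encoding the relations being literally the same on both sides.

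For $\image\hat{\iota}\subseteq Z(\mathcal{A}(BP))$, the key observation is that each $BP$ Adams operation acts by a scalar in each cohomological degree, hence commutes with every bidegree $(0,0)$ additive operation (all such being degree-preserving on $BP_*$); passing to the completed combinations that make up $\image\hat{\iota}$, every element of the image is central. This direction is essentially formal once the model is set up.

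The substance is the reverse inclusion. Given a central $\varphi\in\mathcal{A}(BP)$, I would use that $\mathcal{A}(BP)$ contains a large supply of non-central operations --- in particular enough unstable additive operations that separate and mix the $v$-monomials within a single degree --- to force $\varphi_*$ to be diagonal with respect to a suitable basis of $BP_*$ (or of $BP_*\otimes\Q$), and then to have equal diagonal entries in each fixed degree, i.e.\ to act on $BP_{2d}$ by a single scalar $\lambda_d\in\Zpl$ for every $d$. It remains to identify which scalar sequences $(\lambda_d)_{d\geq 0}$ arise: one checks that being the homotopy action of a genuine additive $BP$-operation forces precisely the numerical congruences which, by the description in~\cite{strongw2}, cut out the image of $\mathcal{A}(g)$, so that $\varphi=\hat{\iota}(f)$ for a unique $f\in\mathcal{A}(g)$.

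The main obstacle is the first step of that argument: producing enough explicit unstable additive $BP$-operations, and controlling the image of $\mathcal{A}(BP)$ inside $\prod_d\End_{\Zpl}(BP_{2d})$ well enough, to force a central operation to be diagonal and then scalar degree by degree. This rests on the structure of $BP^0(\underline{BP}_0)$ in the style of~\cite{bjw}, together with careful bookkeeping in the $v_i$-monomial basis and its rationalisation; matching the resulting congruences against the basis of $\mathcal{A}(g)$ from~\cite{strongw2} is the other delicate point.
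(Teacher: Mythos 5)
This statement is not proved in the paper at all: it is quoted with a \qed from~\cite[Theorem 19]{strongw1}, and the only comparable argument in the present paper is the parallel one for $\bpn$ in Sections~\ref{sec:diag}--\ref{sec:cong}. Your outline does follow the correct strategy (centrality of the image via scalar action plus faithfulness of $\beta_{BP}$; central $\Rightarrow$ scalar in each degree via a supply of ``matrix-like'' operations; identification of the realizable scalar sequences with the $K$-theoretic congruences), but the two steps you defer as ``the main obstacle'' are precisely the content of the theorem, and as written they are gaps rather than deferred routine work. For the step central $\Rightarrow$ diagonal you never exhibit the needed operations. The actual construction does not stay inside $\mathcal{A}(BP)$: one passes to \emph{stable} degree-zero operations, uses duality $BP^*(BP)\cong\Hom_{BP_*}(BP_*(BP),BP_*)$ with $BP_*(BP)=BP_*[t_1,t_2,\dots]$, takes the functional dual to a monomial $t^\beta$, and exploits the triangularity of the right unit, $\eta_R(v^\gamma)=\lambda t^\gamma+(\text{lower and decomposable terms})$ with $\lambda\neq 0$ in the right-lexicographic order, to get matrices $M_{\alpha,\beta}=\sum_{\gamma\geq\beta}\mu_{\gamma,\beta}E_{\alpha,\gamma}$ with $\mu_{\beta,\beta}\neq0$; inverting a lower-triangular change of basis produces operations acting as nonzero $\Zpl$-multiples of every elementary matrix $E_{\alpha,\beta}$, and only then does one apply $\Omega^\infty$ to land in $\mathcal{A}(BP)$. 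Without some such construction the claim that there are ``enough operations that separate and mix the $v$-monomials'' is an assertion, not an argument.

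The second gap is your claim that the congruences are ``literally the same on both sides.'' This needs proof in both directions and neither is formal. That a scalar action $(\lambda_d)$ of a genuine additive unstable $BP$-operation must satisfy the $g$-congruences is proved via the functional model $PBP^0(\underline{BP}_0)\cong\Hom_{BP_*}(QBP_*(\underline{BP}_0),BP_*)$, evaluation on the rational generators $v^\alpha e^{2(p-1)h}\eta_R(v^\beta)$, the projection $\pi:BP_*\to\Zpl$ ($v_1\mapsto1$, $v_i\mapsto0$ for $i>1$), and the Hopf ring map to the periodic summand $G$, which lets one hit a basis of $QG_0(\underline{G}_0)$ and read off exactly the defining congruences of $S_g$ (this is the argument reproduced in Proposition~\ref{improvedcongruence} here). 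Conversely, that \emph{every} sequence in $S_g$ is realized --- equivalently, that $\hat{\iota}$ is well defined, i.e.\ that the infinite linear combinations of unstable $BP$ Adams operations corresponding to elements of $\mathcal{A}(g)$ actually converge in $\mathcal{A}(BP)$ --- is the ``infinite sums of Adams operations'' analysis occupying the earlier parts of~\cite{strongw1} (and~\cite{GW} stably); a priori $BP$, having homotopy in all even degrees and many monomials per degree, could impose strictly stronger constraints than $g$, so this cannot be waved through by saying both rings have topological bases of polynomials in $\Psi^0,\Psi^p,\Psi^q$. Supplying these two ingredients would turn your outline into the proof of~\cite{strongw1}; without them the argument is a correct table of contents but not a proof.
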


It is worth noting that $\hat{\iota}$
is different from the ring homomorphism 
 $i_1: \mathcal{A}(BP\langle 1\rangle)=\mathcal{A}(g)\to \mathcal{A}(BP)$
 provided by Lemma~\ref{splitting}.
 Indeed,
$\hat{\iota}$ sends the identity operation of $g$ to the identity operation of $BP$, 
whereas $i_1$ does not. More generally, it is
instructive to consider the effects of these two maps on Adams operations: $\hat{\iota}$
 takes the Adams operation $\Psi_g^k$ of the Adams summand to the
corresponding Adams operation $\Psi_{BP}^k$ for $BP$; this operation acts as
multiplication by $k^{(p-1)n}$ on each element of the group $\pi_{2(p-1)n}(BP)$. On the other hand,
$i_1$ sends $\Psi_g^k$ to an operation which acts as zero on the ideal $J_1$.

The two maps share the property of being split by $p_1$.

\begin{lemma}
The map $p_1$ also splits $\hat{\iota}$.
\end{lemma}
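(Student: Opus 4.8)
The plan is to show that $p_1 \hat{\iota} = \mathrm{id}_{\mathcal{A}(g)}$ by exploiting what is known about $\hat{\iota}$ on Adams operations together with the description of $p_1$ in terms of homotopy actions from Lemma~\ref{actions}(3). Since all operations in $\mathcal{A}(g)$ are (topological) combinations of Adams operations, and since $p_1 \hat{\iota}$ is continuous and additive, it suffices to check the identity on the Adams operations $\Psi_g^k$. So the first step is to reduce to this generating set.

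Next I would verify the hypothesis needed to apply Lemma~\ref{actions}(3), namely that $\varphi = \hat{\iota}(\Psi_g^k) = \Psi_{BP}^k$ satisfies $\varphi_*(J_1) \subseteq J_1$. As recalled in the excerpt, $\Psi_{BP}^k$ acts on $\pi_{2(p-1)m}(BP)$ as multiplication by $k^m$ (in the notation there, $k^{(p-1)m}$ on $\pi_{2(p-1)m}$ — I would double-check the exponent against the paper's conventions), so it acts by a scalar in each degree and hence preserves every homogeneous ideal, in particular $J_1 = (v_2, v_3, \dots)$. Then Lemma~\ref{actions}(3) gives, for $z \in g_* = BP\langle 1\rangle_*$, that $(p_1 \Psi_{BP}^k)_*(z) = [(\Psi_{BP}^k)_*(z)]$, i.e.\ the action of $p_1 \hat{\iota}(\Psi_g^k)$ on $\pi_*(\underline{g}_0)$ is multiplication by the same scalar in each degree that $\Psi_g^k$ acts by. Hence $(p_1\hat{\iota}(\Psi_g^k))_* = (\Psi_g^k)_*$ as endomorphisms of $\pi_*(\underline{g}_0)$.

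Finally I would invoke faithfulness of the homotopy action for $g = BP\langle 1\rangle$: by Proposition~\ref{faithfulaction} with $n = 1$, the map $\beta_g \colon \mathcal{A}(g) \to \End(\pi_*(\underline{g}_0))$ is injective, so $(p_1\hat{\iota}(\Psi_g^k))_* = (\Psi_g^k)_*$ forces $p_1\hat{\iota}(\Psi_g^k) = \Psi_g^k$. Since $p_1\hat{\iota}$ and the identity are continuous additive maps agreeing on the Adams operations, which topologically generate $\mathcal{A}(g)$, they agree everywhere; thus $p_1$ splits $\hat{\iota}$.

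The main obstacle is not any single hard step but rather bookkeeping: making sure that the homotopy action of $\Psi_{BP}^k$ really is diagonal (scalar in each degree) with the \emph{same} scalars as $\Psi_g^k$ after passing through $p_1$, and that the reduction to Adams operations is legitimate — i.e.\ that $p_1\hat{\iota}$ is continuous for the relevant (profinite/adic) topology on $\mathcal{A}(g)$, so that agreement on a topological generating set suffices. Both $p_1$ and $\hat{\iota}$ are built from honest maps of spectra/spaces, so continuity should be automatic, but it is the point that warrants a sentence of care. Everything else is a direct assembly of Lemmas~\ref{splitting} and~\ref{actions}, Proposition~\ref{faithfulaction}, and the stated properties of $\hat{\iota}$ on Adams operations.
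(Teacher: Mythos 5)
Your proposal is correct and follows essentially the same route as the paper: reduce to the Adams operations $\Psi_g^k$ by continuity and additivity of $p_1\hat{\iota}$, note that $\Psi_{BP}^k$ acts by scalars in each degree and hence preserves $J_1$, apply Lemma~\ref{actions}(3), and conclude by faithfulness of the action on homotopy (the paper cites \cite[Proposition 1]{strongw1} where you invoke Proposition~\ref{faithfulaction} with $n=1$, which is the same fact since $g=BP\langle 1\rangle$). The only discrepancy is the hedged exponent, which is indeed $k^{(p-1)r}$ on $\pi_{2(p-1)r}$, and this does not affect the argument.
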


\begin{proof}
All elements of the topological ring $\mathcal{A}(g)$ can be explicitly expressed 
as certain (infinite) linear combinations of Adams
operations; see~\cite[Proposition 18]{strongw1}. By Lemma~\ref{splitting}, $p_1$ is additive,
and it is straightforward to see that it is continuous with respect to the profinite filtrations on
the rings of operations. Thus it is enough to check that $p_1\hat{\iota}(\Psi_g^k)=\Psi_g^k$
for all $k\in\Zpl$. 
Now $\hat{\iota}(\Psi_g^k)=\Psi_{BP}^k$ and this $BP$ Adams operation
acts as multiplication by $k^{(p-1)r}$ on $BP_{2(p-1)r}$ (and so, in particular, preserves $J_1$).
Then by part (3) of Lemma~\ref{actions},
 $p_1\hat{\iota}(\Psi_g^k)$ acts
as multiplication by $k^{(p-1)r}$ on $\pi_{2(p-1)r}(g)=\Zpl\langle v_1^r\rangle$. But,
by~\cite[Proposition 1]{strongw1},
this completely characterizes $\Psi_g^k$.
\end{proof}

The main map we will consider comes from composing the map $\hat{\iota}: \mathcal{A}(g)\to \mathcal{A}(BP)$ 
of Theorem~\ref{theorem:BP} with the map $p_n: \mathcal{A}(BP)\to \mathcal{A}(\bpn)$ of Lemma~\ref{splitting}.

\begin{definition}
Define $\hat{\iota}_n=p_n\hat{\iota}:\mathcal{A}(g)\to \mathcal{A}(\bpn)$.
\end{definition}

Note that this gives us our map of operations without explicitly mentioning
Adams operations for $\bpn$. On the other hand, we can define such Adams operations
as follows.

\begin{definition}
Define unstable Adams operations for $\bpn$ as the images of the corresponding
$BP$ operations:
    $$
    \Psi_{\bpn}^k:=p_n(\Psi_{BP}^k),
    $$
for $k\in\Zpl$.
\end{definition}

Using part (3) of Lemma~\ref{actions}, we see
that this definition gives unstable Adams operations for $\bpn$ with the expected
actions on homotopy (namely, $\Psi_{\bpn}^k(z)=k^{(p-1)r}z$, for $z\in\bpn_{2(p-1)r}$).
\smallskip

Since $\hat{\iota}(\Psi_{g}^k)=\Psi_{BP}^k$, it follows from this definition of the
Adams operations for $\bpn$ and the description of $\mathcal{A}(g)$ in terms of Adams operations, that
the map $\hat{\iota}_n$ is determined by mapping $g$ Adams operations to the
corresponding $\bpn$ Adams operations and extending to (suitable infinite) linear
combinations.
\medskip

Our main result will be that the analogue of Theorem~\ref{theorem:BP} holds for
$\hat{\iota}_n: \mathcal{A}(g)\to \mathcal{A}(\bpn)$.
 We begin with some basic properties of $\hat{\iota}_n$;
in particular, it is a ring homomorphism (even though $p_n$ is not).

\begin{proposition}\label{mapprops}
For all $n\geq 1$, the map $\hat{\iota}_n:\mathcal{A}(g)\to \mathcal{A}(\bpn)$ is an
injective unital ring homomorphism whose image is contained in the centre
of $\mathcal{A}(\bpn)$.
\end{proposition}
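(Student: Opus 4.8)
The plan is to prove the three claimed properties of $\hat{\iota}_n = p_n\hat{\iota}$ by leveraging the corresponding properties already established for the two constituent maps $\hat{\iota}:\mathcal{A}(g)\to\mathcal{A}(BP)$ (Theorem~\ref{theorem:BP}) and $p_n:\mathcal{A}(BP)\to\mathcal{A}(\bpn)$ (Lemma~\ref{splitting}). Since $p_n$ is not itself a ring homomorphism, the ring-homomorphism claim cannot be obtained by naive composition, so this is where the real work lies; the other two properties should be comparatively quick.

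\medskip

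First I would dispose of injectivity: the previous lemma shows that $p_1$ splits $\hat{\iota}$, and more generally one expects $p_n\hat{\iota}$ to be split, or at least injective, by a similar argument. Actually the cleanest route is to observe that $\hat{\iota}_n = p_n \hat{\iota}$, that $\hat{\iota}$ is injective with image the centre $Z(\mathcal{A}(BP))$, and that $p_n$ restricted to that centre is injective. The latter should follow because central $BP$-operations preserve every ideal $J_n$ (they act ``diagonally'' in the sense anticipated in the introduction — an Adams operation $\Psi_{BP}^k$ acts as multiplication by $k^{(p-1)r}$ on $BP_{2(p-1)r}$, hence preserves $J_n$, and by continuity and linearity so does every element of the centre), so part (3) of Lemma~\ref{actions} applies and computes the action of $p_n\varphi$ on $\bpn_*$ as the reduction mod $J_n$ of the action of $\varphi$ on $BP_*$; since $BP_*$ is torsion-free and $\bpn_* = BP_*/J_n$ detects enough, a nonzero central $\varphi$ acting nontrivially on $BP_*$ will act nontrivially on $\bpn_*$, whence $p_n$ is injective on $Z(\mathcal{A}(BP))$ and $\hat{\iota}_n$ is injective. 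For the unital claim, $\hat{\iota}$ sends the identity of $\mathcal{A}(g)$ to the identity of $\mathcal{A}(BP)$, and $p_n(\mathrm{id}_{BP}) = [\pi_n \circ \mathrm{id} \circ \theta_n] = [\pi_n\theta_n] = \mathrm{id}_{\bpn}$ since $\pi_n\theta_n\simeq\mathrm{id}$; so $\hat{\iota}_n$ is unital.

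\medskip

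The main point is multiplicativity. Here I would use the identification from the remark after Lemma~\ref{splitting}: $\mathcal{A}(\bpn) \cong e_n\mathcal{A}(BP)e_n$, with $i_n$ the inclusion and $p_n(\varphi) = [e_n\varphi e_n]$ under this identification. Then for $\alpha,\beta\in\mathcal{A}(g)$, writing $a = \hat{\iota}(\alpha)$, $b=\hat{\iota}(\beta)$ — both central in $\mathcal{A}(BP)$, and as noted both acting so as to preserve $J_n$, i.e. $e_n a e_n = e_n a = a e_n$ and likewise for $b$ (commuting with the idempotent $e_n = \theta_n\pi_n$ is essentially the statement that they preserve $J_n$ and are compatible with the splitting; this needs a small argument, perhaps most safely via the faithful action on homotopy of Proposition~\ref{faithfulaction} combined with part~(1) of Lemma~\ref{actions}) — we get $p_n(a)p_n(b) = (e_n a e_n)(e_n b e_n) = e_n a e_n b e_n = e_n (ab) e_n = p_n(ab)$, using $e_n e_n = e_n$ and centrality to slide the middle $e_n$ past $a$ or $b$. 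Since $\hat{\iota}$ is multiplicative, $\hat{\iota}_n(\alpha)\hat{\iota}_n(\beta) = p_n(a)p_n(b) = p_n(ab) = p_n\hat{\iota}(\alpha\beta) = \hat{\iota}_n(\alpha\beta)$. Finally, for the centrality of the image: every element of $\mathcal{A}(\bpn) = e_n\mathcal{A}(BP)e_n$ has the form $e_n\varphi e_n$ for some $\varphi\in\mathcal{A}(BP)$, and $\hat{\iota}_n(\alpha) = e_n a e_n = a e_n = e_n a$ with $a$ central in $\mathcal{A}(BP)$, so $(e_n\varphi e_n)(\hat{\iota}_n\alpha) = e_n\varphi e_n a = e_n\varphi a e_n = e_n a \varphi e_n = a e_n\varphi e_n \cdot$ — wait, more carefully: $e_n\varphi e_n \cdot e_n a e_n = e_n\varphi e_n a e_n = e_n \varphi a e_n = e_n a \varphi e_n = e_n a e_n \varphi e_n = (e_n a e_n)(e_n\varphi e_n)$, using $e_n a = a e_n = e_n a e_n$ repeatedly and centrality of $a$; hence $\hat{\iota}_n(\alpha)$ is central in $\mathcal{A}(\bpn)$.

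\medskip

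The step I expect to be the genuine obstacle is justifying that $\hat{\iota}(\alpha)$ commutes with the idempotent $e_n$ in $\mathcal{A}(BP)$ — equivalently, that central $BP$-operations coming from $K$-theory preserve the ideals $J_n$ in a way compatible with Wilson's chosen splittings. The acting-on-homotopy statement (that $\Psi_{BP}^k$, and hence by continuity/linearity every element of $Z(\mathcal{A}(BP)) = \mathrm{im}\,\hat{\iota}$, maps $J_n$ into $J_n$) is easy; upgrading this to an identity $a e_n = e_n a$ of \emph{operations} requires the faithfulness of the homotopy action (Proposition~\ref{faithfulaction}) together with part~(1) of Lemma~\ref{actions} to compare $(ae_n)_*$ and $(e_na)_*$ on $\pi_*(\underline{BP}_0)$. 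I would isolate this as a preliminary lemma and then feed it into the computation above; everything else is formal manipulation with the idempotent $e_n$ and the centrality of $a = \hat{\iota}(\alpha)$.
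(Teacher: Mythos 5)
Your proposal is correct, and on the core point (multiplicativity) it is essentially the paper's argument: transport everything into the corner ring $e_n\mathcal{A}(BP)e_n$ via the injective ring homomorphism $i_n$ (so that $i_np_n(\varphi)=e_n\varphi e_n$), slide the idempotent past $\hat{\iota}(\alpha)$, $\hat{\iota}(\beta)$ using centrality, and cancel $i_n$ by injectivity; your unitality and injectivity arguments likewise match the paper's in substance (the paper gets injectivity from faithfulness of the $\mathcal{A}(g)$-action plus the diagonal action of the image, which is the same computation as your ``$p_n$ is injective on the centre'').

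Two remarks. First, the step you isolate as the genuine obstacle, namely $\hat{\iota}(\alpha)e_n=e_n\hat{\iota}(\alpha)$, needs no argument at all: $e_n=\theta_n\pi_n$ is an additive unstable $BP$-operation of bidegree $(0,0)$, hence an element of $\mathcal{A}(BP)$, and $\hat{\iota}(\alpha)$ lies in the centre of $\mathcal{A}(BP)$ by Theorem~\ref{theorem:BP}, so it commutes with $e_n$ for free --- this is exactly the ``image of $\hat{\iota}$ central'' step in the paper's computation, and your proposed preliminary lemma is superfluous. Your fallback via homotopy actions would also work (the action of $\hat{\iota}(\alpha)$ is by a scalar in each degree, so it commutes with $(e_n)_*$), but the faithfulness needed there is that of $\beta_{BP}$, i.e.\ \cite[Proposition 1]{strongw1}, not Proposition~\ref{faithfulaction}, which is stated for the $\bpn$ and not for $BP$. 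Second, for centrality of the image you argue purely ring-theoretically: for $a$ central and $e_n$ idempotent, $e_nae_n=ae_n$ is central in the corner ring $e_n\mathcal{A}(BP)e_n\cong\mathcal{A}(\bpn)$. This is a genuinely different (and clean) route from the paper, which instead observes that $\hat{\iota}_n(\alpha)$ acts diagonally on $\pi_*(\underline{\bpn}_0)$ and invokes faithfulness (Proposition~\ref{faithfulaction}); your version is valid provided you use the identification $\mathrm{Im}(i_n)=e_n\mathcal{A}(BP)e_n$ recorded in the remark after Lemma~\ref{splitting}, so that every element of $\mathcal{A}(\bpn)$ is reached.
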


\begin{proof}
First we check that $\hat{\iota}_n$ is a ring homomorphism.
For $a,b \in \mathcal{A}(g)$, we have
    \begin{align*}
    i_n(\hat{\iota}_n(a)\hat{\iota}_n(b))&=i_n(\hat{\iota}_n(a))i_n(\hat{\iota}_n(b)) &\text{$i_n$ ring homomorphism}\\
    &=i_np_n\hat{\iota}(a)i_np_n\hat{\iota}(b)                                        &\text{definition of $\hat{\iota}_n$}\\
    &=e_n\hat{\iota}(a)e_n^2\hat{\iota}(b)e_n                                              &\text{by Lemma~\ref{splitting}}\\
    &=e_n^3\hat{\iota}(a)\hat{\iota}(b) e_n                                                &\text{image of $\hat{\iota}$ central}\\
    &=e_n\hat{\iota}(a)\hat{\iota}(b)  e_n                                             &\text{$e_n$ idempotent}\\
    &=e_n\hat{\iota}(ab)e_n                                                           &\text{$\hat{\iota}$ ring homomorphism}\\
    &=i_np_n\hat{\iota}(ab)                                                           &\text{by Lemma~\ref{splitting}}\\
    &=i_n\hat{\iota}_n(ab)                                                            &\text{definition of $\hat{\iota}_n$}.\\
    \end{align*}
But $i_n$ is injective, so $\hat{\iota}_n(a)\hat{\iota}_n(b)=\hat{\iota}_n(ab)$.

Similarly, we find $i_n(\hat{\iota}_n(1))=e_n=i_n(1)$, so $\hat{\iota}_n(1)=1$ and $\hat{\iota}_n$ is unital.

Next we show injectivity. Let $\phi\in\mathcal{A}(g)$, with $\phi\neq 0$.
By~\cite[Proposition 1]{strongw1}, the action of operations in $\mathcal{A}(g)$ on homotopy
groups is faithful. Thus there is some $r$ such that $\phi$ acts on $\pi_{2(p-1)r}(\underline{g}_0)$ as
multiplication by some non-zero element $\lambda$ of $\Zpl$. But then the action of $\hat{\iota}_n(\phi)$
is given by multiplication by $\lambda\neq 0$ on $\pi_{2(p-1)r}(\underline{\bpn}_0)\neq 0$ and so $\hat{\iota}_n(\phi)\neq 0$.

Finally we need to see that the image is central.
The image consists of certain infinite linear combinations of Adams operations for $\bpn$.
It is clear from the action of $\Psi^k_{\bpn}$
on homotopy that $\beta_{\bpn}(\Psi^k_{\bpn})=(\Psi^k_{\bpn})_*$ commutes with
all elements of $\End(\pi_*( \underline{\bpn}_0))$.
So the same holds for the image under $\beta_{\bpn}$ of (suitable infinite) linear combinations of the Adams operations.
But by Proposition~\ref{faithfulaction},
$\beta_{\bpn}$ is injective, so any element of the image of $\mathcal{A}(g)$ commutes with all elements of $\mathcal{A}(\bpn)$.
\end{proof}

As a consequence of the definitions, we have the following commutative diagram
of abelian groups, for $m\leq n$,
giving the compatibility between the various $\hat{\iota}$ maps.

    $$
    \xymatrix{
    &\mathcal{A}(\bpn)\ar[r]^\cong_{i_n} &e_n\mathcal{A}(BP)e_n\ar[dd]^{e_m\circ-\circ e_m}\\
    \mathcal{A}(g)\ar[ru]^{\hat{\iota}_n}\ar[r]^{\hat{\iota}}
    \ar[dr]_{\hat{\iota}_m}
    &\mathcal{A}(BP)  \ar[u]_{p_n}\ar[d]^{p_m}\\
    &\mathcal{A}(\bpm)\ar[r]^\cong_{i_m} &e_m  \mathcal{A}(BP)e_m\\
    }
    $$

\begin{remark}\rm
It is natural to ask if one can obtain the ring $\mathcal{A}(BP)$ as any kind of limit over the
$\mathcal{A}(\bpn)$, but this does not seem to be the case. 
On the one hand, we can put the $\mathcal{A}(\bpn)$ into a direct system 
of injective ring homomorphisms and produce an injective ring homomorphism
$\displaystyle{\lim_{\rightarrow_n}} \mathcal{A}(\bpn)\to \mathcal{A}(BP)$. However, this is not surjective;
for example the identity operation on $BP$ is not in the image. On the other hand, the maps
in the other direction are not ring homomorphisms, so the inverse limit 
$\displaystyle{\lim_{\leftarrow_n}} \mathcal{A}(\bpn)$ can only be formed in the category of abelian groups.
\end{remark}

\section{Diagonal operations}
\label{sec:diag}
We define unstable diagonal operations for $\bpn$, just as was done for $BP$ in~\cite{strongw1}.

\begin{definition}\label{diagonals}
Write $\mathcal{D}(\bpn)$ for the subring of $\mathcal{A}(\bpn)$ consisting of operations
whose action on each homotopy group $\pi_{2(p-1)r}(\underline{\bpn}_0)$ is multiplication
by an element $\mu_{r}$ of $\Zpl$. We call
elements of $\mathcal{D}(\bpn)$ \emph{unstable diagonal operations}.
\end{definition}

The main result of this section will be that the central operations coincide with the
diagonal operations. One inclusion is easy.

For a ring $R$, we write $Z(R)$ for its centre.

\begin{lemma}\label{diagonalincentral}
We have $\mathcal{D}(\bpn)\subseteq Z(\mathcal{A}(\bpn))$.
\end{lemma}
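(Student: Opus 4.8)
The plan is to show that any diagonal operation commutes with an arbitrary additive operation $\phi \in \mathcal{A}(\bpn)$, and the natural way to do this is to pass to actions on homotopy groups, where the faithfulness result of Proposition~\ref{faithfulaction} lets us reduce a statement about operations to a statement about endomorphisms of $\pi_*(\underline{\bpn}_0)$. So first I would take $d \in \mathcal{D}(\bpn)$ and $\phi \in \mathcal{A}(\bpn)$, and observe that since $\beta_{\bpn}$ is an injective ring homomorphism, it suffices to check that $\beta_{\bpn}(d)$ and $\beta_{\bpn}(\phi)$ commute in $\End(\pi_*(\underline{\bpn}_0))$.

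Next I would unwind what $\beta_{\bpn}(d) = d_*$ looks like. By Definition~\ref{diagonals}, $d$ acts on $\pi_{2(p-1)r}(\underline{\bpn}_0)$ as multiplication by a scalar $\mu_r \in \Zpl$. In other words, $d_*$ is a ``graded scalar'' endomorphism: on each graded piece it is multiplication by an element of the ground ring $\Zpl$. The key elementary fact is that such an endomorphism is central in $\End(M)$ for any graded abelian group $M$: given any homomorphism $g \colon M \to M$ of graded abelian groups and any $m$ in degree $2(p-1)r$, we have $d_*(g(m))$ and $g(d_*(m))$ both equal to (the relevant scalar times) $g(m)$ — one needs to be a little careful that $g$ need not preserve degree, but $g$ does decompose into its graded components $g = \sum g_j$ where $g_j$ raises degree by some fixed amount, and each $g_j$ commutes with $d_*$ because multiplication by an element of $\Zpl$ is natural. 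Actually the cleanest phrasing: for $m \in \pi_{2(p-1)r}$, $g(m) = \sum_s g(m)_s$ with $g(m)_s \in \pi_{2(p-1)s}$, and $d_*g(m) = \sum_s \mu_s g(m)_s$ while $g(d_*m) = g(\mu_r m) = \mu_r g(m) = \sum_s \mu_r g(m)_s$; these need not agree unless $g(m)$ is concentrated in degree $2(p-1)r$! So in fact $d_*$ is \emph{not} obviously central in all of $\End(\pi_*(\underline{\bpn}_0))$ — it is only central among the degree-preserving endomorphisms.

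This points to where the real content lies: I need to use that $\beta_{\bpn}(\phi)$ lands in the subring of endomorphisms that preserve the grading (up to the appropriate normalization). Since $\phi$ is a bidegree $(0,0)$ operation, its action $\phi_*$ on $\pi_*(\underline{\bpn}_0)$ is degree-preserving — it sends $\pi_{2(p-1)r}$ to $\pi_{2(p-1)r}$. Granting this, the computation above does go through: for $m \in \pi_{2(p-1)r}(\underline{\bpn}_0)$, both $d_*\phi_*(m)$ and $\phi_*d_*(m)$ equal $\mu_r \phi_*(m)$, using that $\phi_*(m)$ stays in degree $2(p-1)r$ and that $\phi_*$ commutes with multiplication by the scalar $\mu_r \in \Zpl$ (as $\phi_*$ is an additive, hence $\Zpl$-linear, map). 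Hence $d_*\phi_* = \phi_*d_*$ in $\End(\pi_*(\underline{\bpn}_0))$, and by injectivity of $\beta_{\bpn}$ we conclude $d\phi = \phi d$ in $\mathcal{A}(\bpn)$.

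The main obstacle, then, is nothing deep but rather a bookkeeping point that must be stated carefully: one must be clear that the relevant endomorphisms of homotopy coming from bidegree $(0,0)$ operations are degree-preserving, so that ``multiplication by a scalar on each graded piece'' really is central against them. I would phrase Definition~\ref{diagonals} and the argument so that $d_*$ is recognized as acting $\Zpl$-linearly and degree-wise, and spell out in one line that for a degree-preserving, additive $\phi_*$ this forces commutation. Everything else — the reduction via $\beta_{\bpn}$, the fact that $\mathcal{D}(\bpn)$ is a subring — is immediate from the results already in the excerpt.
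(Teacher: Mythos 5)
Your argument is correct and is essentially the same as the paper's: the paper likewise observes that the action on homotopy of a diagonal operation commutes with the action of any operation in $\mathcal{A}(\bpn)$ and then invokes the faithfulness of the action (Proposition~\ref{faithfulaction}) to conclude centrality. Your extra care about degree preservation and $\Zpl$-linearity of $\phi_*$ is just a spelled-out version of what the paper leaves implicit.
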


\begin{proof}
The action on homotopy of $\phi\in\mathcal{D}$ commutes with the action of any operation
in $\mathcal{A}(\bpn)$, so
the inclusion $\mathcal{D}(\bpn)\subseteq Z(\mathcal{A}(\bpn))$ follows
from the faithfulness of the action (Proposition~\ref{faithfulaction}).
\end{proof}

Our proof of the reverse inclusion will amount to finding enough operations
in order to force a central operation to act diagonally. Our strategy will be to 
start from stable $BP$ operations, over which we have better control, and then to
view these as additive unstable operations and project them to $\mathcal{A}(\bpn)$.
\medskip

First we will need some notation for sequences indexing monomials.
We write $v^\alpha$ for the monomial $v_1^{\alpha_1}v_2^{\alpha_2}\dots v_m^{\alpha_m}$,
where $\alpha=(\alpha_1, \alpha_2, \dots, \alpha_m)$ is a sequence
of non-negative integers, with $\alpha_m\neq 0$.
We order such sequences right lexicographically; explicitly
for $\alpha=(\alpha_1, \alpha_2, \dots, \alpha_m)$ and $\beta=(\beta_1, \beta_2, \dots, \beta_n)$,
we have $\alpha<\beta$ if $m<n$ or if $m=n$ and there is some $j$, with
$1\leq j\leq m$, such that $\alpha_k=\beta_k$ for all $k>j$ but $\alpha_j <\beta_j$.

We add sequences placewise: $(\alpha+\beta)_i=\alpha_i+\beta_i$, so that
$v^\alpha v^\beta=v^{\alpha+\beta}$. It is straightforward to check that the ordering
behaves well with respect to the addition: if $\alpha\leq \alpha'$ and $\beta\leq \beta'$
then $\alpha+\beta\leq \alpha'+\beta'$.

The degree of $v^\alpha$ is $2\sum_{i=1}^m\alpha_i (p^i-1)$ and we write this as $|\alpha|$.

\begin{lemma}
Let $\alpha, \beta, \gamma$ denote sequences indexing monomials in the
same degree, $|\alpha|=|\beta|=|\gamma|$. 
\begin{enumerate}
\item There is a stable $BP$ operation $\phi_\beta$ in
$BP^{|\alpha|}(BP)$ whose action $BP_{|\alpha|}\to BP_0=\Zpl$ has the property that
$(\phi_\beta)_*(v^\gamma)=\mu_{\gamma, \beta}$, where 
    \begin{align*}
        \mu_{\beta,\beta}&\neq 0,\\
        \mu_{\gamma,\beta}&=0\quad \text{if $\gamma<\beta$}.
    \end{align*}
\item There is a stable $BP$ operation $\phi_{\alpha, \beta}$ in $BP^0(BP)$
whose action $BP_{|\alpha|}\to BP_{|\alpha|}$ has the property that
$(\phi_{\alpha, \beta})_*(v^\gamma)=\mu_{\gamma, \beta}v^\alpha$, where 
    \begin{align*}
        \mu_{\beta,\beta}&\neq 0,\\
        \mu_{\gamma,\beta}&=0\quad \text{if $\gamma<\beta$}.
    \end{align*}
\end{enumerate}
\end{lemma}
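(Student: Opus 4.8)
The plan is to construct the required stable operations using the well-understood structure of $BP^*(BP)$, and then to exploit the triangularity of the action of Landweber--Novikov type operations on monomials with respect to the right lexicographic order. First I would recall that $BP^*(BP)$ acts on $BP_*$ and that there is a rich supply of stable operations; in particular, for any sequence $E$ the Landweber--Novikov operation $s_E$ (or a suitable $BP$-theoretic analogue) acts on a monomial $v^\gamma$ by a sum of monomials, and one can arrange, by a careful choice, that the ``leading term'' (with respect to our order) behaves predictably. The key point is that if $|\beta|$ is fixed, the finitely many monomials $v^\gamma$ of that degree are linearly ordered, and I want an operation that kills everything strictly below $v^\beta$ and is nonzero on $v^\beta$.

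The main step is part (1). I would proceed by downward induction on $\beta$ among the (finitely many) sequences of degree $|\alpha|$. For the largest such sequence, a single well-chosen operation landing in $BP^{|\alpha|}(BP)$ (for instance built from an appropriate Landweber--Novikov operation followed by projection to $BP_0$, using that $BP_0=\Zpl$ is torsion-free) will be nonzero on that monomial; for smaller $\beta$, one takes an operation nonzero on $v^\beta$ and then, using the inductive hypothesis, subtracts suitable multiples of the operations already constructed for larger sequences to cancel the unwanted values on $v^\gamma$ with $\gamma>\beta$ — the triangularity of the whole system with respect to the order makes this a finite linear-algebra manoeuvre over $\Zpl$ (or over $\Q$, clearing denominators afterwards, since everything is torsion-free). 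This yields $\phi_\beta$ with $\mu_{\beta,\beta}\neq 0$ and $\mu_{\gamma,\beta}=0$ for $\gamma<\beta$; no control is claimed on $\mu_{\gamma,\beta}$ for $\gamma>\beta$, which is what keeps the construction easy.

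For part (2) I would bootstrap from part (1): compose $\phi_\beta$, which lands in $BP_{0}=\Zpl$ in the relevant degree, with a stable operation $BP_0\to BP_{|\alpha|}$ realizing multiplication by $v^\alpha$ on the bottom class — equivalently, use that multiplication by $v^\alpha\in BP_*$ is itself (the action of) a stable operation, or use the $BP_*$-linearity of the target. Setting $\phi_{\alpha,\beta}$ to be this composite gives $(\phi_{\alpha,\beta})_*(v^\gamma)=\mu_{\gamma,\beta}v^\alpha$ with exactly the same vanishing pattern, as required.

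I expect the main obstacle to be making precise which explicit stable operations to use and verifying the triangularity claim — i.e.\ that the action of the chosen generating operations on the monomial basis $\{v^\gamma\}$ is genuinely triangular (or at least that the ``leading coefficients'' are units) with respect to the right lexicographic order on sequences. This is where one must be careful about Hazewinkel's generators versus other choices and about the precise form of the coaction; once that structural input is in place, the inductive cancellation argument and the passage from part (1) to part (2) are routine.
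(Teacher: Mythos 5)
Your overall strategy --- produce stable operations whose action on the monomial basis is triangular for the right-lexicographic order, then deduce part (2) from part (1) by multiplying by $v^\alpha$ --- has the right shape, and your reduction of (2) to (1) agrees with the paper's ($\phi_{\alpha,\beta}=v^\alpha\phi_\beta$). But the heart of the lemma is exactly the triangularity statement you defer as ``the main obstacle'', and as written your induction does not establish it. First, the direction of the order is mixed up: the lemma asks for $\mu_{\gamma,\beta}=0$ when $\gamma<\beta$, i.e.\ $\phi_\beta$ must kill the monomials \emph{below} $\beta$, yet your inductive step cancels the values at $\gamma>\beta$ (and then you state that no control is needed for $\gamma>\beta$, contradicting that step). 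Moreover your base case, the largest sequence in the given degree, is the one carrying the \emph{most} vanishing conditions --- it must annihilate every other monomial of that degree --- and these are not addressed by merely choosing an operation nonzero on that monomial. Second, even after correcting the direction (say upward induction, cancelling the values at $\gamma<\beta$ using previously built $\phi_\gamma$), the subtractions can destroy the nonvanishing at $v^\beta$, since the earlier operations are allowed to be nonzero above their diagonal entries; to rule this out you need to know that enough functionals on $BP_{|\alpha|}$ are realized by stable operations, which is precisely the structural input your proposal does not supply.

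The paper obtains that input directly, with no induction: by duality a stable operation corresponds to a $BP_*$-linear functional $\overline{\phi}\colon BP_*(BP)=BP_*[t_1,t_2,\dots]\to BP_*$, with $\phi_*=\overline{\phi}\,\eta_R$, and the only fact needed about the right unit is that $\eta_R(v_m)=pt_m+(\text{lower $t$-monomials})+(\text{terms with nontrivial $v$-coefficients})$. Since $\eta_R$ is a ring map and the order is compatible with products, $\eta_R(v^\gamma)=\lambda t^\gamma+\sum_{\gamma'<\gamma}\lambda'_{\gamma'}t^{\gamma'}+\sum_{\delta\neq\emptyset}\mu'_{\delta,\delta'}v^\delta t^{\delta'}$ with $\lambda\neq 0$, and the functional dual to $t^\beta$ (sending $t^\beta$ to $1$ and all other monomials to $0$) gives $\phi_\beta$ at once; a degree count shows the $v^\delta t^{\delta'}$ terms cannot contribute. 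If you want to salvage your Landweber--Novikov formulation you must prove the analogous leading-term statement for the (co)action on the Hazewinkel generators, which is essentially this same computation; without it the proposal is incomplete.
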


\begin{proof}
The second part follows immediately from the first, by
taking $\phi_{\alpha, \beta}=v^\alpha\phi_\beta$.

For the first part, we recall that $BP$ has good duality and so a
stable operation $\phi$ in $BP^*(BP)$ corresponds to a degree zero $BP_*$-linear functional
$\overline{\phi}:BP_*(BP)\to BP_*$. The action of the operation on coefficient
groups is recovered from the functional by precomposition with the right
unit map, $\eta_R:BP_*\to BP_*(BP)$; that is, $\phi_*=\overline{\phi}\eta_R$. We have $BP_*(BP)=BP_*[t_1, t_2, \dots]$
and so a functional as described above is determined by any choice of its value
on each monomial in the $t$s.

The map $\eta_R$ is of course very complicated, but we will only need to exploit
some basic information about its form. We have
    $$
    \eta_R(v_m)=pt_m+\sum \lambda_\gamma t^\gamma + \sum \mu_{\delta, \delta'}v^\delta t^{\delta'},
    $$
where $\lambda_\gamma, \mu_{\delta, \delta'}\in\Zpl$, $\delta\neq \emptyset$, and $\gamma$ runs
over sequences other than $(0,\dots, 0,1)$ in the degree of $v_m$. (The only
content here is the form of the top term, of course.)
Now $\eta_R$ is a ring map and it follows from the properties of the ordering on monomials described above
that
    $$
    \eta_R(v^\gamma)=\lambda t^\gamma +\sum_{\gamma'<\gamma} \lambda'_{\gamma'} t^{\gamma'} + 
    \sum_{\delta\neq \emptyset} \mu'_{\delta, \delta'}v^\delta t^{\delta'},
    $$
for some $\lambda, \lambda'_{\gamma'}, \mu'_{\delta, \delta'}\in\Zpl$ with
$\lambda\neq 0$.

Now consider the functional $\overline{\phi_\beta}:BP_*(BP)\to BP_*$ which is 
zero on all monomials except $t^\beta$
and sends $t^\beta$ to $1$. By construction the corresponding operation
$\phi_\beta$ has the required property. 
\end{proof}

Now the following lemma follows as a matter of elementary linear algebra.

\noindent Let $E_{\alpha, \beta}$ denote the elementary matrix with a $1$ in the 
$(\alpha, \beta)$ position and zeroes everywhere else.

\begin{lemma}
For all $\alpha, \beta$ with $|\alpha|=|\beta|$, there is some non-zero 
$\overline{\mu}_{\alpha, \beta}\in\Zpl$ and an operation
$\varphi_{\alpha,\beta}$ in $BP^0(BP)$ such that the matrix of its
action on $BP_{|\alpha|}$ is $\overline{\mu}_{\alpha, \beta}E_{\alpha, \beta}$.
\end{lemma}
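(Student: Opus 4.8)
The plan is to fix a degree $d=|\alpha|=|\beta|$ and work with the finite-rank free $\Zpl$-module $BP_d$, which has as a $\Zpl$-basis the monomials $v^\gamma$ with $|\gamma|=d$, ordered right lexicographically. For each such $\gamma$ the previous lemma supplies an operation $\phi_{\gamma,\beta}$ in $BP^0(BP)$ whose action on $BP_d$ is upper-triangular in this ordering: $(\phi_{\alpha,\beta})_*(v^\gamma)=\mu_{\gamma,\beta}v^\alpha$ with $\mu_{\beta,\beta}\neq 0$ and $\mu_{\gamma,\beta}=0$ for $\gamma<\beta$. The idea is that these operations, composed suitably, let us clear out the off-diagonal entries and isolate a single matrix unit up to a nonzero $\Zpl$-scalar.

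First I would record that the matrices $M_{\alpha,\beta}:=(\phi_{\alpha,\beta})_*$, acting on $BP_d$ with basis indexed by the finite ordered set $S_d=\{\gamma:|\gamma|=d\}$, have the shape $M_{\alpha,\beta}=\sum_{\gamma\geq\beta}\mu_{\gamma,\beta}\,E_{\alpha,\gamma}$ with leading coefficient $\mu_{\beta,\beta}\neq 0$. Writing the set $S_d$ as $\gamma^{(1)}<\gamma^{(2)}<\cdots<\gamma^{(N)}$, I would now build, by downward induction on the index, operations whose action is $\overline\mu\,E_{\alpha,\gamma^{(j)}}$ for a nonzero $\overline\mu$. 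The base case is $j=N$: here $\beta=\gamma^{(N)}$ is maximal, so $M_{\alpha,\gamma^{(N)}}=\mu_{\gamma^{(N)},\gamma^{(N)}}E_{\alpha,\gamma^{(N)}}$ already, and we are done. For the inductive step, suppose we have produced, for every $\gamma>\beta$ and every $\alpha'$ with $|\alpha'|=d$, an operation acting as a nonzero multiple of $E_{\alpha',\gamma}$. Starting from $M_{\alpha,\beta}=\mu_{\beta,\beta}E_{\alpha,\beta}+\sum_{\gamma>\beta}\mu_{\gamma,\beta}E_{\alpha,\gamma}$, I subtract off the unwanted terms: for each $\gamma>\beta$ with $\mu_{\gamma,\beta}\neq 0$, I take the operation acting as $\overline\mu_{\alpha,\gamma}E_{\alpha,\gamma}$ from the inductive hypothesis, precompose it (using composition of operations in the ring $BP^0(BP)$, equivalently multiply the matrices) with an operation acting as a nonzero multiple of $E_{\gamma,\gamma}$ so as to kill everything except the $(\alpha,\gamma)$ entry, and combine these linearly over $\Zpl$ with $M_{\alpha,\beta}$ to cancel each off-diagonal contribution. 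Clearing denominators (i.e.\ scaling by a suitable element of $\Zpl$ to stay in the ring and avoid inverting $p$) leaves an operation whose matrix is exactly $\overline\mu_{\alpha,\beta}E_{\alpha,\beta}$ with $\overline\mu_{\alpha,\beta}\neq 0$, as required.

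Two points need a little care. First, since $BP_d$ is a free $\Zpl$-module and $\Zpl$ is local, all of this linear algebra is legitimate provided one is careful never to divide by $p$; the nonzero scalars $\mu_{\beta,\beta}$ need not be units, so instead of inverting them one rescales the whole relation, which is why the conclusion only produces $\overline\mu_{\alpha,\beta}E_{\alpha,\beta}$ rather than exactly $E_{\alpha,\beta}$. Second, at the very end, to get the $(\alpha,\beta)$ entry only (rather than a full column in row $\alpha$), one uses that $M_{\gamma,\gamma}$ has nonzero diagonal entry at position $\gamma$, and composing $E_{\alpha,\gamma}$-type operations with these gives genuine scalar multiples of $E_{\alpha,\gamma}$; the triangular structure guarantees no new terms of index $<\beta$ ever reappear, so the induction is consistent. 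The main obstacle is purely bookkeeping: organizing the downward induction so that at each stage the off-diagonal terms to be cancelled have strictly larger index and hence are already handled, and keeping track of the scalars to be sure the cancelling combination still lands in $BP^0(BP)$ rather than in a localization. There is no topological input beyond the previous lemma; everything here is elementary linear algebra over $\Zpl$ carried out inside the ring of stable operations.
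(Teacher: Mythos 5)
Your argument is correct and is essentially the paper's own: the paper observes that the matrix expressing the $M_{\alpha,\beta}$ in terms of the ordered elementary matrices is non-singular lower triangular, so a $\Zpl$-linear combination of the $M_{\alpha,\beta}$ (clearing denominators, since the diagonal entries $\mu_{\beta,\beta}$ need not be units) yields $\overline{\mu}_{\alpha,\beta}E_{\alpha,\beta}$, which is exactly your downward induction packaged into one statement. The only superfluous step is your precomposition with an operation acting as a multiple of $E_{\gamma,\gamma}$: the operations supplied by your inductive hypothesis already act as scalar multiples of single matrix units, so plain $\Zpl$-linear combinations suffice, as in the paper.
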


\begin{proof}
The preceding lemma gives the operation $\phi_{\alpha, \beta}$.
Using the $\Zpl$-basis of monomials in the $v$s, ordered as above, this
operation acts on coefficients in the given degree by the matrix
    $$
    M_{\alpha,\beta}=\sum_{\gamma\geq \beta} \mu_{\gamma, \beta}E_{\alpha, \gamma},
    $$
where $\mu_{\beta, \beta}\neq 0$. 

If we order the elementary matrices by $E_{\beta,\gamma}<E_{\beta',\gamma'}$ if
$\gamma<\gamma'$ or $\gamma=\gamma'$ and $\beta<\beta'$, then the
above shows that the matrix writing the $M_{\alpha,\beta}$ in terms of the
$E_{\alpha,\beta}$ is non-singular lower triangular.
Hence, for some  $\overline{\mu}_{\alpha, \beta}\neq 0$, we can write
$\overline{\mu}_{\alpha, \beta}E_{\alpha, \beta}$ as a $\Zpl$-linear combination
of the $M_{\alpha,\beta}$. We take $\varphi_{\alpha,\beta}$ to be the
corresponding linear combination of the $\phi_{\alpha,\beta}$.
\end{proof}

\begin{theorem}\label{centreisdiag}
We have $Z(\mathcal{A}(\bpn))= \mathcal{D}(\bpn)$.
\end{theorem}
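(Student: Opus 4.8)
The plan is to prove the remaining inclusion $Z(\mathcal{A}(\bpn))\subseteq\mathcal{D}(\bpn)$, since Lemma~\ref{diagonalincentral} already gives the reverse containment. So suppose $\phi\in\mathcal{A}(\bpn)$ is central; by faithfulness of the action (Proposition~\ref{faithfulaction}) it suffices to show that $\phi_*$ acts on each $\pi_{2(p-1)r}(\underline{\bpn}_0)=\bpn_{2(p-1)r}$ as multiplication by a scalar in $\Zpl$, where the relevant $\Zpl$-module has basis the monomials $v^\gamma$ of degree $2(p-1)r$ whose indexing sequences $\gamma=(\gamma_1,\dots,\gamma_m)$ satisfy $m\le n$. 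Fix such a degree and write $\phi_*(v^\gamma)=\sum_\delta c_{\delta,\gamma}v^\delta$ for scalars $c_{\delta,\gamma}\in\Zpl$; the goal is to show $c_{\delta,\gamma}=0$ for $\delta\ne\gamma$ and $c_{\gamma,\gamma}$ independent of $\gamma$.

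The key step is to produce, for each pair $\alpha,\beta$ of sequences of the same degree with $m(\alpha),m(\beta)\le n$, an operation in $\mathcal{A}(\bpn)$ whose action on the relevant homotopy group is a nonzero scalar multiple of the elementary matrix $E_{\alpha,\beta}$, and then to impose commutativity with $\phi$. To get these, I would take the stable $BP$ operation $\varphi_{\alpha,\beta}\in BP^0(BP)$ from the last lemma above, whose action on $BP_{|\alpha|}$ is $\overline{\mu}_{\alpha,\beta}E_{\alpha,\beta}$ in the monomial basis. Viewing it as an additive unstable operation and applying $p_n$ requires checking the hypothesis of Lemma~\ref{actions}(3), namely $(\varphi_{\alpha,\beta})_*(J_n)\subseteq J_n$: since $\varphi_{\alpha,\beta}=v^\alpha\phi_\beta$ with $\phi_\beta$ a degree-$|\alpha|$ operation landing in $BP_0=\Zpl$, its value on any $v^\gamma$ is a $\Zpl$-multiple of $v^\alpha$, and one checks that a monomial in $J_n$ (i.e.\ involving some $v_i$ with $i>n$) is sent into $J_n$ provided $\alpha$ itself has length $\le n$ — this is where the length restrictions matter and must be handled carefully. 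Granting this, Lemma~\ref{actions}(3) gives $(p_n\varphi_{\alpha,\beta})_*(v^\gamma)=[\overline{\mu}_{\alpha,\beta}\,v^\alpha\,\mu'_\gamma]$ for the appropriate scalars, so on the $\le n$-truncated basis we obtain operations realising $\overline{\mu}_{\alpha,\beta}E_{\alpha,\beta}$ up to the same lower-triangular change of basis as before, hence after a $\Zpl$-linear combination an operation $\psi_{\alpha,\beta}\in\mathcal{A}(\bpn)$ acting as a nonzero scalar times $E_{\alpha,\beta}$.

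Once these $\psi_{\alpha,\beta}$ are in hand the linear algebra is routine. Centrality of $\phi$ forces, in each fixed degree, the matrix of $\phi_*$ to commute with every $E_{\alpha,\beta}$ (with $m(\alpha),m(\beta)\le n$), and a matrix commuting with all elementary matrices is a scalar matrix; comparing across the finitely many basis elements in the degree shows the scalar is a single $\mu_r\in\Zpl$. Running this over all $r$ shows $\phi\in\mathcal{D}(\bpn)$, completing the proof. The main obstacle I anticipate is precisely the bookkeeping in the previous paragraph: ensuring that the stable $BP$ operations chosen have $\alpha$ of length $\le n$ so that (i) $v^\alpha$ survives to $\bpn_*$ and (ii) $J_n$ is preserved, and confirming that the resulting operations still span enough elementary matrices on the truncated basis for the commutation argument to bite — in other words, verifying part (1) of the dichotomy stated in the introduction, that "enough matrices arise" even after truncation.
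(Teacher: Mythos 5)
Your proposal follows essentially the same route as the paper: push the stable operations $\varphi_{\alpha,\beta}$ (viewed in $\mathcal{A}(BP)$ via $\Omega^\infty$) through $p_n$, restrict to $\alpha,\beta$ supported on $v_1,\dots,v_n$, check that the resulting operations act on $\bpn_{|\alpha|}$ as nonzero multiples of elementary matrices, and then use commutation plus faithfulness (Proposition~\ref{faithfulaction}) to force a central operation to be diagonal. That is exactly the paper's argument, so the overall structure is fine.

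However, the justification you give at the step you yourself flag as the crux is garbled. You argue that $(\varphi_{\alpha,\beta})_*$ preserves $J_n$ because its values are $\Zpl$-multiples of $v^\alpha$ ``provided $\alpha$ has length $\le n$'' --- but a \emph{nonzero} multiple of $v^\alpha$ with $\alpha$ of length $\le n$ is precisely \emph{not} in $J_n$. What actually makes the computation work is the condition on $\beta$, not $\alpha$: since $\varphi_{\alpha,\beta}$ already acts on $BP_{|\alpha|}$ as $\overline{\mu}_{\alpha,\beta}E_{\alpha,\beta}$, it annihilates every monomial of that degree other than $v^\beta$; as $\beta$ has length $\le n$, every monomial of degree $|\alpha|$ lying in $J_n$ is sent to $0$. (The paper encodes this via the observation that monomials outside $J_n$ precede those in $J_n$ in the ordering, so the matrix splits into blocks.) Moreover you do not need, and should not invoke, Lemma~\ref{actions}(3), whose hypothesis $\varphi_*(J_n)\subseteq J_n$ is a condition in all degrees and is not verified (or verifiable) for these operations; instead compute directly in the single degree $|\alpha|$, as the paper does: $(p_n\varphi_{\alpha,\beta})_*(v^\gamma)=(\pi_n)_*(\varphi_{\alpha,\beta})_*(\theta_n)_*(v^\gamma)$, with $(\theta_n)_*(v^\gamma)=v^\gamma+w$, $w\in J_n$, $(\varphi_{\alpha,\beta})_*(w)=0$ by the above, and $(\pi_n)_*(v^\alpha)=v^\alpha\neq 0$ since $\alpha$ has length $\le n$. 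Also note that no further lower-triangular clean-up is needed after applying $p_n$: the passage to elementary matrices was already carried out at the $BP$ level in producing $\varphi_{\alpha,\beta}$. With these corrections your argument coincides with the paper's proof.
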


\begin{proof}
We noted the inclusion $\mathcal{D}(\bpn)\subseteq Z(\mathcal{A}(\bpn))$ in
Lemma~\ref{diagonalincentral} above, so it remains to show the reverse
inclusion.

As in the proof of Lemma 11 of~\cite{strongw1}, there is an injection
$BP^0(BP)\hookrightarrow \mathcal{A}(BP)$ 
from the stable degree zero $BP$ operations to the additive unstable bidegree $(0,0)$ operations,
given by sending a stable operation to its zero component (that is, applying $\Omega^\infty$). 
This allows us to view the operation $\varphi_{\alpha,\beta}$ constructed above as an element of $\mathcal{A}(BP)$,
still acting on coefficients in the specified degree as some non-zero multiple of the elementary
matrix $E_{\alpha, \beta}$. 

Next we map these operations to $\mathcal{A}(\bpn)$: consider $p_n(\varphi_{\alpha,\beta})\in 
\mathcal{A}(\bpn)$. We consider the action of this operation on coefficients in degree
$|\alpha|$. Now we can write $BP_{|\alpha|}$ as a direct sum of $\Zpl$-modules $R\oplus J$,
where $R=BP_{|\alpha|}\cap \Zpl[v_1, \dots, v_n]$ and $J=BP_{|\alpha|}\cap J_n$. Notice
that any monomial in the $v$s lying in $R$ is lower in the ordering than any
monomial lying in $J$. So, when we write the action of an operation as a matrix with
respect to the monomial basis, this splits into blocks, according to the decomposition into
$R$ and $J$.

Now let $\alpha, \beta$ index monomials in $R$. Using
$(p_n(\varphi_{\alpha,\beta}))_*=(\pi_n)_*(\varphi_{\alpha,\beta})_*(\theta_n)_*$,
it is easy to check the action
of $p_n(\varphi_{\alpha,\beta})$ is given by 
$\overline{\mu}_{\alpha, \beta}E_{\alpha, \beta}$ on $\bpn_{|\alpha|}$.

So now suppose we have a central operation $\phi\in\mathcal{A}(\bpn)$.
Since it commutes with each operation $\varphi_{\alpha,\beta}$, its action
on $\bpn_{|\alpha|}$ commutes with the action of some non-zero multiple
of each elementary matrix. Hence the matrix of its action in this degree is diagonal with
all diagonal entries equal. That is $\phi\in \mathcal{D}(\bpn)$.
\end{proof}

\section{Congruences}
\label{sec:cong}

Let $S_g$ be the subring of the
infinite direct product $\prod_{i=0}^\infty \Zpl$ consisting of sequences
$(\mu_i)_{i\geq 0}$ satisfying the system of congruences which characterizes
the action on coefficient groups of an element of $\mathcal{A}(g)$.

The congruences can be described as follows; for further details see~\cite[Section 4]{strongw1}.
Let $G$ denote the periodic Adams summand and let
$\{\hat{f}_n\,|\, n\geq 0\}$ be a $\Zpl$-basis for $QG_0(\underline{G}_0)$, where
$Q$ denotes the indecomposable quotient for the $\star$-product. These basis
elements can be written as rational polynomials in the variable $\hat{w}=\hat{u}^{-1}\hat{v}$, 
where $G_*=\Zpl[\hat{u}^{\pm 1}]$ and $\hat{v}=\eta_R(\hat{u})$.
The $n$-th congruence is the condition that the rational linear combination of the $\mu_i$ obtained from 
$\hat{f}_n$ by sending $\hat{w}^i$ to $\mu_i$ lies in $\Zpl$. Different choices of basis lead
to equivalent systems of congruences with the same solution set $S_g$. (Explicit choices, involving
Stirling numbers, are known, but we do not need these here.)

The following proposition is a stronger version of the congruence result 
of~\cite{strongw1}. The proof closely follows that of
~\cite[Proposition 16]{strongw1}.

\begin{proposition}\label{improvedcongruence}
Fix $n\geq 1$.
Suppose that an operation $\theta\in\mathcal{A}(BP)$ is such that its action on
homotopy $\theta_*:BP_*\to BP_*$ satisfies the following conditions. For 
each $i\geq 0$, there is some $\mu_i\in\Zpl$ such that
\begin{enumerate}
\item $\theta_*(x)\equiv \mu_i x \mod J_n$ if $x\notin J_n$, $|x|=2(p-1)i$, and 
\item $\theta_*(x)=0$ if $x\in J_n$.
\end{enumerate}
Then $(\mu_i)_{i\geq 0} \in S_g$.
\end{proposition}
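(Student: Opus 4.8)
The plan is to transfer the hypotheses on $\theta\in\mathcal{A}(BP)$ to hypotheses about an associated element of $\mathcal{A}(g)$, or rather about a sequence of integers, and then to invoke the congruence description of $S_g$ recalled above. The key observation is that conditions (1) and (2) say precisely that $\theta$ behaves, modulo $J_n$ and on the complement of $J_n$, like a diagonal operation: on each degree $2(p-1)i$ it acts on the relevant quotient by multiplication by $\mu_i$. So the first step is to produce, from $\theta$, an honest operation in $\mathcal{A}(\bpn)$ whose action on each $\pi_{2(p-1)i}(\underline{\bpn}_0)$ is multiplication by $\mu_i$. Here I would use $p_n$ together with part (3) of Lemma~\ref{actions}: since $\theta_*(J_n)\subseteq J_n$ by condition (2), that lemma gives $(p_n\theta)_*(z)=[\theta_*(z)]$ for $z\in\bpn_*$, and then condition (1) shows $(p_n\theta)_*$ is multiplication by $\mu_i$ in degree $2(p-1)i$. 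Thus $p_n\theta\in\mathcal{D}(\bpn)$.

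The crux is then to deduce that the resulting sequence $(\mu_i)_{i\ge 0}$ satisfies the congruences defining $S_g$. This is exactly where the proof must "closely follow that of \cite[Proposition 16]{strongw1}": one replays the argument of \emph{loc. cit.}, which deduces the congruences for a $BP$ operation, but now only using the information available modulo $J_n$. Concretely, the $n$-th congruence comes from pairing a basis element $\hat f_n$ of $QG_0(\underline G_0)$ — a rational polynomial in $\hat w=\hat u^{-1}\hat v$ — against the action data, and asking the resulting rational combination of the $\mu_i$ to be $p$-integral. In the $BP$ setting this integrality is forced because the operation genuinely exists on $BP$ and acts integrally; here the point is that all the relevant structure (the coaction, the right unit, the relevant pairings) is visible after reduction mod $J_n$, since $\bpn$ is a quotient $BP$-module spectrum and the degrees involved in each fixed congruence are bounded. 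So I would: (i) set up the same pairing/congruence machinery as in \cite[Section 4]{strongw1}; (ii) check that each individual congruence only involves finitely many of the $\mu_i$ and only the mod-$J_n$ behaviour of $\theta_*$; and (iii) conclude integrality of each rational combination from the integrality of $\theta_*$ on $BP$ (conditions (1),(2) guaranteeing that the mod-$J_n$ reduction is the multiplication-by-$\mu_i$ operator, with no contribution from $J_n$).

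The main obstacle I anticipate is step (ii)–(iii): ensuring that cutting everything down modulo $J_n$ does not destroy the derivation of the congruences. One has to be careful that the congruence for $\hat f_n$ genuinely only sees the "diagonal part" of $\theta_*$ — i.e. that the off-diagonal entries of $\theta_*$ in a given degree (which need \emph{not} vanish, only vanish mod $J_n$ on $R$ and entirely on $J$) do not enter the rational combination producing that congruence. I expect this to work because the congruences of \cite{strongw1} are statements about the action on a single cyclic summand at a time — reflecting that $\pi_{2(p-1)i}(\underline g_0)$ is the rank-one module $\Zpl\langle v_1^i\rangle$ — and the passage $BP\to\bpn$ preserves exactly that rank-one piece (it is the bottom monomial $v_1^i$, lowest in the ordering, hence in $R$), while killing the rest. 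Making this precise is the real content; once it is in place, the proposition follows by quoting the congruence description of $S_g$ from the start of this section.
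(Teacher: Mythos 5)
Your high-level plan points at the right place (the congruence machinery of \cite[Section 4]{strongw1} and the proof of \cite[Proposition 16]{strongw1}), but the step you yourself flag as ``the real content'' --- showing that only the diagonal data $(\mu_i)$ enters each congruence and that the mod-$J_n$ hypotheses suffice --- is exactly the part that needs an argument, and your proposal does not supply it. The paper's proof works entirely at the $BP$ level with the operation $\theta$ itself, not with $p_n\theta$: one passes to the corresponding $BP_*$-linear functional $\overline{\theta}:QBP_*(\underline{BP}_0)\to BP_*$ and composes with the ring map $\pi:BP_*\to\Zpl$ sending $v_1\mapsto 1$ and $v_i\mapsto 0$ for $i>1$. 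The crucial point resolving your worry about off-diagonal and $J_n$-ambiguous terms is that $\pi$ kills $J_n$ for every $n\geq 1$, so on the rational generators $v^\alpha e^{2(p-1)h}\eta_R(v^\beta)$ of $QBP_*(\underline{BP}_0)$ one computes, using $\theta_*(v^\beta)=\overline{\theta}(e^{2(p-1)h}\eta_R(v^\beta))$ and hypotheses (1)--(2), that $V_\mu:=\pi\overline{\theta}$ takes the value $\mu_h$ when $\alpha$ and $\beta$ are supported on $v_1$ (with $\beta=(h,0,\dots)$) and $0$ otherwise --- the indeterminacy mod $J_n$ and the off-diagonal behaviour are annihilated by $\pi$, not by any finiteness or degree-bound argument of the kind you sketch in (ii). One then factors $V_\mu=\pi_\mu\widetilde{\phi}$ through the image of $QBP_*(\underline{BP}_0)\to QG_*(\underline{G}_0)$, where $\pi_\mu(\hat{u}^a e^{2(p-1)b}\hat{v}^b)=\mu_b$, and uses the fact (from the proof of \cite[Theorem 19]{strongw1}) that each basis element $\hat{f}_m$ of $QG_0(\underline{G}_0)$ lies, up to a power of $\hat{u}$, in this image; integrality of $V_\mu$ then yields precisely the congruences defining $S_g$. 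None of this construction (the functional $V_\mu$, the evaluation on generators, the factorization, the lifting of the $\hat{f}_m$) appears in your proposal, so as it stands the proof has a genuine gap at its central step.

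A secondary remark: your opening move of forming $p_n\theta\in\mathcal{D}(\bpn)$ is correct but unnecessary, and in fact runs backwards relative to how the proposition is deployed in the paper (Proposition~\ref{applytobpn} passes from a diagonal $\bpn$ operation to a $BP$ operation satisfying hypotheses (1)--(2) via $i_n$). Deriving the congruences from the $\bpn$ operation alone would require control of $Q\bpn_*(\underline{\bpn}_0)$ and its comparison with $QG_*(\underline{G}_0)$, which is precisely what working with $\theta$ at the $BP$ level avoids.
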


\begin{proof}
Under the isomorphism
$PBP^0(\underline{BP}_0)\cong \Hom_{BP_*}(QBP_{\ast}(\underline{BP}_0), BP_*)$,
the operation $\theta$ corresponds 
to a $BP_*$-linear functional $\overline{\theta}:QBP_{\ast}(\underline{BP}_0)\to BP_*$
of degree zero.

Let $V_{\mu}: QBP_{\ast}(\underline{BP}_0)\to \Zpl$ be the composite
$\pi\overline{\theta}$ where $\pi : BP_{\ast}\rightarrow \Zpl$ is
defined to be the ring map determined by
    \begin{align*}
    v_1&\mapsto 1,\\
    v_i&\mapsto 0, \quad\text{for $i>1$}.
    \end{align*}

Thus we have a commutative diagram
    $$
    \xymatrix{ QBP_{\ast}(\underline{BP}_0) \ar[rr]^-{\overline{\theta}}
    \ar[rrd]_-{V_{\mu}} && BP_{\ast} \ar[d]^-{\pi} \\
    && \Zpl \\
    }
    $$

Recall from~\cite{bjw} that $QBP_{\ast}(\underline{BP}_0)$ is torsion-free and rationally
generated by elements of the form $v^\alpha e^{2(p-1)h} \eta_R(v^\beta)$, where
$v^\alpha\in BP_*$, $v^\beta\in BP_{2(p-1)h}$, $e\in QBP_1(\underline{BP}_1)$ is the suspension element and
$\eta_R$ is the right unit map.
By~\cite[12.4]{bjw}, the action of an operation $\theta$ on homotopy can be recovered from the
corresponding functional $\overline{\theta}$ via 
$\theta_*(v^\beta)  =\overline{\theta}(e^{2(p-1)h}\eta_R(v^\beta))$, for $v^\beta\in BP_{2(p-1)h}$.

We have
    \begin{align*}
    V_\mu(v^\alpha e^{2(p-1)h} \eta_R(v^\beta))&=\pi\overline{\theta}(v^\alpha e^{2(p-1)h} \eta_R(v^\beta))\\
        &=\pi(v^\alpha\theta_*(v^\beta))\\
        &=\begin{cases}
        \pi(v^{\alpha} (\mu_{h}v^\beta + y))&\text{for some $y\in J_n$, if $v^\beta\notin J_n$}\\
        0&\text{if $v^\beta\in J_n$}
        \end{cases}\\
        &=\begin{cases}
        \mu_h &\text{if $\alpha=(\alpha_1, 0,0,\dots)$ and $\beta=(h,0,0,\dots)$}\\
        0&\text{otherwise}.
        \end{cases}
    \end{align*}
Thus for each $x\in QBP_{\ast}(\underline{BP}_0)$,  $V_{\mu}(x)$ is some rational
linear combination of the $\mu_{i}$ and since this lies in $\Zpl$, this
gives congruences which must be satisfied by the $\mu_i$.

Consider the standard map of ring spectra $BP\to G$. This induces
a map of Hopf rings $BP_*(\underline{BP}_*)\to G_*(\underline{G}_*)$ and thus
a ring map on indecomposables $QBP_*(\underline{BP}_0)\to QG_*(\underline{G}_0)$
which we denote by $\phi$.

Now we claim that we can factorize $V_{\mu}$ as
$\pi_{\mu}\widetilde{\phi}$ where
    $
    \widetilde{\phi}:QBP_{\ast}(\underline{BP}_0)\rightarrow\textrm{Im}(\phi)
    $
is the map given by restricting the codomain of
    $
    \phi:QBP_{\ast}(\underline{BP}_0)\rightarrow QG_{\ast}(\underline{G}_0),
    $
and
    $$
    \pi_{\mu}:\textrm{Im}(\phi)\rightarrow\Zpl
    $$
is the $\Q$-linear map determined by
    $$
    \hat{u}^{a}e^{2(p-1)b}\hat{v}^{b}\mapsto\mu_{b}.
    $$

To prove the claim, it is enough to check on rational generators:
    \begin{align*}
    \pi_{\mu}\widetilde{\phi}&(v^\alpha e^{2(p-1)h} \eta_R(v^\beta))\\
    &=
    \begin{cases}
        \pi_{\mu}(\hat{u}^{\alpha_1}e^{2(p-1)h}\hat{v}^h) &\text{if $\alpha=(\alpha_1, 0,0,\dots)$ and $\beta=(h,0,0,\dots)$}\\
        0&\text{otherwise}
    \end{cases}\\
     &=\begin{cases}
        \mu_h &\text{if $\alpha=(\alpha_1, 0,0,\dots)$ and $\beta=(h,0,0,\dots)$}\\
        0&\text{otherwise}
    \end{cases}\\
    &=V_\mu(v^\alpha e^{2(p-1)h} \eta_R(v^\beta)).
    \end{align*}

Just as in~\cite[proof of Theorem~19]{strongw1}, up to some shift by a power of $\hat{u}$, each basis element of 
 $QG_0(\underline{G}_0)$, say
$\hat{f}_{n}$, for $n\geq 0$, is in
the image of the map from $QBP_{\ast}(\underline{BP}_0)$. 
So we have $x_n\in QBP_{\ast}(\underline{BP}_0)$
such that $\phi(x_n)=\hat{u}^{c_n}\hat{f}_n$, for some $c_n\in\Z$.
Then $V_{\mu}(x_n)= \pi_{\mu}\widetilde{\phi}(x_n) = \pi_{\mu}\left(\hat{u}^{c_n}\hat{f}_n\right)$.

But $V_{\mu}(x_n)\in\Zpl$ and $\pi_\mu(\hat{u}^{c_n}\hat{f}_n)\in\Zpl$ is exactly
the $n$-th congruence condition for $g$.
Hence $(\mu_i)_{i\geq 0}\in S_{g}$.
\end{proof}

Now we show how this applies to $\bpn$ operations.

\begin{proposition}\label{applytobpn}
Let $n\geq 1$ and $\phi\in \mathcal{D}(\bpn)$. Then $i_n(\phi)\in \mathcal{A}(BP)$
satisfies the hypotheses of Proposition~\ref{improvedcongruence}.
\end{proposition}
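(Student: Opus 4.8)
The plan is to unwind the definitions of $i_n$ and $\mathcal{D}(\bpn)$ and match them against the two hypotheses of Proposition~\ref{improvedcongruence}, using Lemma~\ref{actions} as the main tool. Let $\phi\in\mathcal{D}(\bpn)$, so by definition there is, for each $i\geq 0$, an element $\mu_i\in\Zpl$ such that $\phi_*$ acts as multiplication by $\mu_i$ on $\pi_{2(p-1)i}(\underline{\bpn}_0)=\bpn_{2(p-1)i}$. Set $\theta=i_n(\phi)\in\mathcal{A}(BP)$; I must verify that $\theta_*:BP_*\to BP_*$ satisfies conditions (1) and (2) of Proposition~\ref{improvedcongruence} with these same $\mu_i$.

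First I would handle condition (2). For $y\in J_n$, part (2)(b) of Lemma~\ref{actions} gives immediately $(i_n\phi)_*(y)=0$, which is exactly condition (2). Next, condition (1): take $x\in BP_*$ with $x\notin J_n$ and $|x|=2(p-1)i$. By part (2)(a) of Lemma~\ref{actions}, $(i_n\phi)_*(x)\equiv \phi_*([x])\bmod J_n$, where $[x]\in\bpn_{2(p-1)i}$ is the image of $x$ under the projection $BP_*\to\bpn_*$. Since $\phi\in\mathcal{D}(\bpn)$, we have $\phi_*([x])=\mu_i[x]$, and since the projection sends $x$ to $[x]$, this says $\phi_*([x])\equiv \mu_i x\bmod J_n$. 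Combining, $\theta_*(x)=(i_n\phi)_*(x)\equiv \mu_i x\bmod J_n$, which is condition (1). One small point worth spelling out: Proposition~\ref{improvedcongruence} asks for a \emph{single} sequence $(\mu_i)$ working uniformly in both conditions; here the same $\mu_i$ supplied by membership in $\mathcal{D}(\bpn)$ does the job for condition (1), while condition (2) involves no $\mu_i$ at all, so there is no consistency issue.

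The only thing one should be slightly careful about is that the degrees match up: the homotopy groups of $\underline{\bpn}_0$ in the relevant range are $\bpn_{2(p-1)i}$ (torsion-free, as these theories are concentrated in even degrees with no torsion), so the phrase ``acts as multiplication by $\mu_i$'' in Definition~\ref{diagonals} and the hypothesis ``$\theta_*(x)\equiv\mu_i x\bmod J_n$ for $x$ of degree $2(p-1)i$'' in Proposition~\ref{improvedcongruence} are referring to the same indexing. Granting that, there is essentially no obstacle — this proposition is just a bookkeeping bridge between the ring-theoretic characterisation of diagonal operations and the homotopy-theoretic hypotheses needed to feed the congruence machinery. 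I would therefore expect the proof to be two or three lines: cite Lemma~\ref{actions}(2)(a) for condition (1), Lemma~\ref{actions}(2)(b) for condition (2), and conclude by Proposition~\ref{improvedcongruence} that $(\mu_i)_{i\geq 0}\in S_g$ (this last consequence being presumably stated in the corollary that follows).
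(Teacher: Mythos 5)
Your proposal is correct and follows essentially the same route as the paper: the paper's proof also just applies part (2) of Lemma~\ref{actions} (stated there on the monomial basis $v^\alpha$, with $\mu_{|\alpha|/2(p-1)}$ supplied by membership in $\mathcal{D}(\bpn)$), which is the same bookkeeping you carry out for a general $x$. No gaps.
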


\begin{proof}
Let $\phi\in \mathcal{D}(\bpn)$, where the action of
$\phi_*$ on $\pi_{2(p-1)i}(\underline{\bpn}_0)$ is multiplication
by $\mu_i$.
Then, using $[-]$ to denote classes modulo $J_n$, for $v^\alpha\in BP_*$,
by part (2) of Lemma~\ref{actions},
    \begin{align*}
    (i_n(\phi))_*(v^\alpha)
        &=\begin{cases}
        \phi_*([v^\alpha]) \mod J_n &\text{if $v^\alpha\notin J_n$}\\
        0&\text{if $v^\alpha\in J_n$}
        \end{cases}\\
        &=\begin{cases}
        \mu_{|\!|\alpha|\!|}v^\alpha \mod J_n &\text{if $v^\alpha\notin J_n$}\\
        0&\text{if $v^\alpha\in J_n$},
        \end{cases}
    \end{align*}
where $|\!|\alpha|\!|=\frac{|\alpha|}{2(p-1)}$.
\end{proof}

Putting everything together gives the following.

\begin{theorem}\label{main}
For all $n\geq 1$,
the image of the injective ring homomorphism $\hat{\iota}_n:\mathcal{A}(g)\hookrightarrow \mathcal{A}(\bpn)$
is the centre $Z(\mathcal{A}(\bpn))$ of $\mathcal{A}(\bpn)$.
\end{theorem}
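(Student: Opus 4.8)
The plan is to assemble Theorem~\ref{main} from the pieces already in place, so the proof is essentially a matter of chaining together the results of the preceding sections. First I would show that the image of $\hat{\iota}_n$ is contained in $\mathcal{D}(\bpn)$: by Proposition~\ref{mapprops} the image lies in $Z(\mathcal{A}(\bpn))$, and by Theorem~\ref{centreisdiag} we have $Z(\mathcal{A}(\bpn))=\mathcal{D}(\bpn)$, so in fact $\hat{\iota}_n(\mathcal{A}(g))\subseteq \mathcal{D}(\bpn)$. Combined with injectivity (also from Proposition~\ref{mapprops}), this reduces everything to identifying $\mathcal{D}(\bpn)$ with $\mathcal{A}(g)$ via the actions on homotopy.

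Next I would set up the two sequence-of-scalars descriptions. An element $\phi\in\mathcal{D}(\bpn)$ is, by definition and by faithfulness (Proposition~\ref{faithfulaction}), completely determined by the sequence $(\mu_i)_{i\geq 0}\in\prod_{i\geq 0}\Zpl$ giving its action on each $\pi_{2(p-1)i}(\underline{\bpn}_0)$. Likewise, by the faithfulness of the action for $g$ (\cite[Proposition 1]{strongw1}), an element of $\mathcal{A}(g)$ is determined by its sequence of scalars, and the set of sequences arising this way is exactly $S_g$. So the theorem comes down to the claim: the set of sequences $(\mu_i)$ realised by diagonal $\bpn$ operations equals $S_g$. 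One inclusion is easy and already essentially recorded: if $(\mu_i)$ arises from $\phi_g\in\mathcal{A}(g)$, then $\hat{\iota}_n(\phi_g)$ realises the same sequence (its action on $\pi_{2(p-1)r}(\underline{\bpn}_0)$ is multiplication by $\mu_r$, as computed in the injectivity argument of Proposition~\ref{mapprops} via part (3) of Lemma~\ref{actions}); hence every element of $S_g$ is realised by a diagonal operation.

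For the reverse inclusion — every diagonal $\bpn$ operation gives a sequence in $S_g$ — I would invoke the work of Section~\ref{sec:cong}. Given $\phi\in\mathcal{D}(\bpn)$ with scalars $(\mu_i)$, Proposition~\ref{applytobpn} tells us that $i_n(\phi)\in\mathcal{A}(BP)$ satisfies the hypotheses of Proposition~\ref{improvedcongruence}, and the conclusion of that proposition is precisely $(\mu_i)_{i\geq 0}\in S_g$. Combining the two inclusions, the sequences realised by $\mathcal{D}(\bpn)$ are exactly $S_g$, so $\hat{\iota}_n$ restricts to a bijection $\mathcal{A}(g)\to\mathcal{D}(\bpn)=Z(\mathcal{A}(\bpn))$; being already a ring homomorphism, it is an isomorphism onto the centre.

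The main obstacle is not in this final assembly — which is genuinely just bookkeeping — but lives upstream in Theorem~\ref{centreisdiag} (finding enough ``elementary matrix'' operations $\varphi_{\alpha,\beta}$ to pin a central operation down to scalar-diagonal form) and in Proposition~\ref{improvedcongruence} (verifying that reducing mod $J_n$ and restricting to the non-$J_n$ part does not lose or distort the congruence information, so that the same system $S_g$ governs all heights $n$). Once those are granted, I would keep the proof of Theorem~\ref{main} to a few lines: cite Proposition~\ref{mapprops} and Theorem~\ref{centreisdiag} for $\hat{\iota}_n(\mathcal{A}(g))\subseteq\mathcal{D}(\bpn)=Z(\mathcal{A}(\bpn))$ and injectivity; cite Proposition~\ref{applytobpn} and Proposition~\ref{improvedcongruence} for surjectivity onto $\mathcal{D}(\bpn)$; conclude.
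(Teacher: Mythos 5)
Your proposal is correct and follows essentially the same route as the paper: image contained in $Z(\mathcal{A}(\bpn))=\mathcal{D}(\bpn)$ via Proposition~\ref{mapprops} and Theorem~\ref{centreisdiag}, then identification of the scalar sequences realised by $\mathcal{D}(\bpn)$ with $S_g$ via faithfulness, Proposition~\ref{applytobpn} and Proposition~\ref{improvedcongruence}. The only difference is cosmetic: you spell out explicitly that $\hat{\iota}_n(\phi_g)$ realises the same sequence as $\phi_g$, which the paper encodes in its commutative diagram.
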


\begin{proof}
We have $\image(\hat{\iota}_n)\subseteq Z(\mathcal{A}(\bpn))= \mathcal{D}(\bpn)$, 
by Proposition~\ref{mapprops} and Theorem~\ref{centreisdiag}. Now let $\phi\in\mathcal{D}(\bpn)$,
where $\phi$ acts on $\pi_{2(p-1)i}(\underline{\bpn}_0)$ as multiplication
by $\mu_i$. By Proposition~\ref{faithfulaction}, $\phi$ is completely determined by
the sequence $(\mu_i)_{i\geq 0}$. By Proposition~\ref{applytobpn}, $i_n(\phi)$ satisfies the hypotheses
of Proposition~\ref{improvedcongruence} for the same sequence $(\mu_i)_{i\geq 0}$, and so by Proposition~\ref{improvedcongruence},
 $(\mu_i)_{i\geq 0}\in S_g$. Thus we have the following commutative diagram.
    $$
    \xymatrix{
    \mathcal{A}(g) \ar[r]^-{\cong}_-{\hat{\iota}_n} \ar[d]_-{\cong} &
    \textrm{Im}(\hat{\iota}_n) \ar@{^{(}->}[r] & Z(\mathcal{A}(\bpn))
    \ar[r]^-{=} &\mathcal{D}(\bpn)
    \ar@{^{(}->}[d]_-{} \\
    S_{g} \ar[rrr]^-{=} &&& S_{g}
    }
    $$
So the inclusions are equalities.
\end{proof}

\end{document}